\documentclass{amsart}
\usepackage{amsmath,amsthm,amssymb}
\usepackage{xcolor}
\usepackage{graphicx}
\usepackage{tikz-cd}
\usepackage{paralist}
\usepackage{environ}
\usepackage{xcolor}
\usepackage{hyperref}
\usepackage{centernot}
\usepackage{mathtools}
\usepackage{marvosym}
\usepackage{xcolor,cancel}
\usepackage{float}
\usepackage{subcaption}
\usepackage{mathrsfs}
\usepackage{verbatim}
\usepackage{csquotes}
\usepackage{pstricks}
\usepackage{mdframed}
\usepackage[most]
{tcolorbox}

\makeatletter
\def\square{\pst@object{square}}
\def\square@i(#1,#2)#3{{\use@par\solid@star\psframe[origin={#1,#2}](#3,#3)}}
\makeatother

\makeatletter
\DeclareFontFamily{U}{tipa}{}
\DeclareFontShape{U}{tipa}{bx}{n}{<->tipabx10}{}
\newcommand{\arc@char}{{\usefont{U}{tipa}{bx}{n}\symbol{62}}}%

\newcommand{\arc}[1]{\mathpalette\arc@arc{#1}}

\newcommand{\arc@arc}[2]{%
	\sbox0{$\m@th#1#2$}%
	\vbox{
		\hbox{\resizebox{\wd0}{\height}{\arc@char}}
		\nointerlineskip
		\box0
	}%
}
\makeatother

\makeatletter
\newcommand{\doublewedge}{\big@doubleop{\wedge}}
\newcommand{\big@doubleop}[1]{%
	\DOTSB\mathop{\mathpalette\big@doubleop@aux{#1}}\slimits@
}

\newcommand\big@doubleop@aux[2]{%
	\sbox\z@{$\m@th#1#2$}%
	\makebox[1.35\wd\z@][s]{$\m@th#1#2\hss#2$}%
}

\newcommand{\abs}[1]{\left|#1\right|}     

\theoremstyle{plain}
\newtheorem{theorem}{Theorem}
\newtheorem{lemma}{Lemma}

\newtheorem{definition}{Definition}

\newtheorem{example}{Example}
\newtheorem{corollary}{Corollary}
\newtheorem{proposition}{Proposition}

\usepackage{cancel}
 
\usepackage[title]{appendix}

\begin{document}
	\title{Proximal and Descriptive Topological Rings: Structure and Properties}
	\author[M. Almahariq]{Maram Almahariq}
	\address{Department of Mathematics, Birzeit university, Ramallah, Palestine,
	}
	\email{maram.mahareeq.14@gmail.com}
    
\subjclass[2010]{54E05; 54H05, 54E17, 22A20}	

\keywords{Proximity space, proximal ring, descriptive group, proximal continuous}
\maketitle
\begin{abstract}
The set of open subsets in a proximity (descriptive) space
$(\mathfrak{X},\zeta)$ forms a topological system that allows the definition of proximal and descriptive topological groups. This framework is not restricted to proximal (descriptive) groups, but also extends naturally to proximal (descriptive) topological rings. In this study, I review the key properties and related results, with particular emphasis on the definition of proximal (descriptive) fields. Moreover, I address the behavior of closed sets in proximal (descriptive) topological rings.
\end{abstract}

	\maketitle
	\tableofcontents

{
\section{Introduction}
In this paper, I introduce the concepts of proximal rings and descriptive rings, along with their basic properties. The first section provides the necessary background, including the main definitions and preliminary results that form the basis of the study. The second section presents the main results, focusing on the classification of topological rings from both proximal and descriptive perspectives. Finally, the paper concludes with a summary, outlining potential directions for future research. 
\section{Preliminaries}
Topology is one of the most important branches of mathematics, and Hausdorff made the first definition of it in 1914. His definition considers the elements of a nonvoid set 
$\mathfrak{X}$ as points and associates with each point 
$\mathfrak{x} \in \mathfrak{X}$ a collection of subsets, called the neighborhoods of 
$\mathfrak{x}$, determined by a function 
$\mathfrak{N}$. These neighborhoods are required to satisfy certain axioms, which together ensure that $\mathfrak{X}$ forms a topological space \cite{Hausdorff1914}.

Algebraic structures are defined as nonvoid sets equipped with one or two binary operations, the most common of which are addition and multiplication. Among the most important algebraic structures, which find applications beyond algebra itself, are groups and rings. A set
$\mathfrak{G}$ forms a group under a binary operation if it satisfies the following properties: closure, associativity, the existence of an identity element, and the existence of an inverse element for each member of the set. A ring is an algebraic structure built upon a set that is already a group under one operation (typically addition), while the second operation (multiplication) forms a semigroup and satisfies the distributive law with respect to the first operation (\cite{Cayley1854}, see also \cite{Hilbert1897}).

The concept of computational proximity was originally based on metric spaces, where a gap function is defined to measure how close or far two subsets are within a nonvoid set by determining the distance between points in these subsets (\cite{PetersOzturk2025,PetersComputational2016}, see also \cite{PetersGuadagni2016}). However, in 1936, \v{C}ech introduced the concept of a proximity space (abbreviated as ($\textbf{pro.space}$)) by defining a proximity relation 
$\zeta$ that satisfies a specific set of axioms \cite{Cech1966}.  Later, researchers such as Lodato and Efremovi\v{c} extended \v{C}ech’s axioms by introducing additional conditions, resulting in what is now known as the Lodato relation and the Efremovi\v{c} relation, respectively \cite{Lodato1962,Efremovic1952}. \newline\newline
The closure of a subset $\mathfrak{A}$ of a $\textbf{pro.space}$ $(\mathfrak{X},\zeta)$ is defined as follows:
\[cl_{\zeta}(\mathfrak{A})
= \{ \mathfrak{x} \in \mathfrak{X} \mid \{\mathfrak{x}\} \,\zeta\, \mathfrak{A} \}
\quad \text{(see \cite{Peters2014})}\]

\begin{definition} \cite{Puisseux1850,Efremovic1952}
Let $\alpha : \mathfrak{X} \mapsto \mathfrak{X}$ be a map defined on a $\textbf{pro.space}$ $(\mathfrak{X},\zeta)$. The map $\alpha$ is called a proximal continuous map (abbreviated as ($\textbf{pro.con}$)), for $\mathcal{W},\mathcal{K} \subseteq \mathfrak{X}$ 
\[\mathcal{W} \;\zeta\; \mathcal{K} \;\longrightarrow\; 
\alpha(\mathcal{W}) \;\zeta\; \alpha(\mathcal{K})\]
\end{definition}

It can be observed that the composition of two $\textbf{pro.con}$ maps is also $\textbf{pro.con}$. \cite{PetersVergili2023} \newline

A map $\alpha$ is called a proximal isomorphism (abbreviated as \textbf{(pro.iso)}) if it is \textbf{(pro.con)} and its inverse is also \textbf{(pro.con)}. Furthermore, if $\alpha$ is bijective, it is referred to as a proximal homeomorphism (abbreviated as \textbf{(pro.homo)}) \cite{PetersVergili:2020}. \newline

Let $\zeta_\Phi$, where $\Phi$ is a probe function, be a proximity relation based on descriptive quantities such as color, shape, texture, etc. In this case, this relation is called a descriptive relation, and the space consisting of a nonvoid set $\mathfrak{X}$ and $\zeta_\Phi$, $(\mathfrak{X}, \zeta_\Phi)$ is called a descriptive space \textbf{(des.space)}, where $\zeta_\Phi$ satisfies the descriptive proximity axioms \cite{DiConcilio2018,Peters2013}.
\begin{definition} \cite{Is2024}
If $(\mathfrak{G},+)$ is a group equipped with a proximity relation $\zeta$. Then, $(\mathfrak{G},+,\zeta)$ is called a proximal group, denoted by ($\boldsymbol{P\mathcal{G}}_{\zeta}$), if the following maps are $\textbf{pro.con}$:

\begin{enumerate}
\item [1.] $\operatorname{add}_\zeta: \mathfrak{G} \times \mathfrak{G} \mapsto \mathfrak{G}, 
 \quad \operatorname{add}_\zeta(x_{0}, x_{1}) = x_{0} + x_{1}.$
 \item [2.]$ \operatorname{inv}_\zeta: \mathfrak{G} \mapsto \mathfrak{G}, \quad \operatorname{inv}_\zeta(x) = -x.$
 \end{enumerate}
\end{definition}
For a descriptive group, which is defined analogously to a proximal group but with a descriptive relation $\zeta_\Phi$, the corresponding maps $\operatorname{add}_{\zeta_\Phi}$ and $\operatorname{inv}_{\zeta_\Phi}$ are (\textbf{des.con}).\newline\newline
After presenting the concept of proximal group and its properties, it is natural that the following question is raised: Can we introduce a proximal topological ring (descriptive topological ring), which will be abbreviated as proximal rings (descriptive rings) ? Technically, I will express proximal rings with the symbol $\boldsymbol{P\mathcal{R}}_{\zeta}$ and descriptive rings with $\boldsymbol{P\mathcal{R}}_{\zeta_{\Phi}}$. The answer is yes, by introducing a ring equipped with the proximity relation $\zeta$ and the descriptive relation $\zeta _{\Phi}$, one obtains a topology $\boldsymbol{\tau_{\zeta}}$ ($\boldsymbol{\tau_{\zeta_{\Phi}}}$) induced by the open subsets of \textbf{pro.space} $(\mathcal{R},\zeta)$ and  \textbf{des.space} $(\mathcal{R},\zeta_{\Phi})$ whose complements are closed sets. This space is equipped with three $\textbf{pro.con}$ ($\textbf{des.con}$) maps, respectively,  which are defined as follows:

\section{Main result}

The results presented in this paper are inspired by studies in \cite{Warner1993,Chakravartty2023}, which discuss the concept of topological rings.

\subsection{Proximal Rings}
\begin{definition}
Consider the ring $(\mathcal{R},\boxplus,\boxtimes)$ equipped with a proximity relation $\zeta$. Then it is called a $\boldsymbol{P\mathcal{R}}_{\zeta}$, if the following maps are \textbf{pro.con}.
 \\ For $(w_{0}, w_{1}) \in \mathcal{R} \times \mathcal{R}$ and $w \in \mathcal{R}$, define: 
\begin{enumerate}

\item [1.] $\operatorname{add}_\zeta: \mathcal{R} \times \mathcal{R} \mapsto \mathcal{R}, 
 \quad \operatorname{add}_\zeta(w_{0}, w_{1}) = w_{0} \boxplus w_{1}. $
 
\item [2.] $\operatorname{mul}_\zeta: \mathcal{R} \times \mathcal{R} \mapsto \mathcal{R}, \quad  \operatorname{mul}_\zeta(w_{0}, w_{1}) = w_{0} \boxtimes w_{1}. $

\item [3.]$ \operatorname{inv}_\zeta: \mathcal{R} \mapsto \mathcal{R}, \quad \operatorname{inv}_\zeta(w) = -w.$
\end{enumerate}
\end{definition}

\begin{definition}
Let $\mathcal{W},\mathcal{K}$ be subsets of a $\boldsymbol{P\mathcal{R}}_{\zeta}$ $(\mathcal{R},\boxplus,\boxtimes)$. Then,
$\mathcal{W} \boxplus \mathcal{K}$, $\mathcal{W} \boxtimes \mathcal{K}$ and $\mathcal{-W}$ are given by $\{ w \boxplus k : w \in \mathcal{W}, k \in \mathcal{K}\}$ , $\{ w \boxtimes k : w \in \mathcal{W}, k \in \mathcal{K}\}$ and $\{ -w: w \in \mathcal{W}\}$, respectively. 
\end{definition}

\begin{example} \label{Ex: real number}
Let $\zeta$ be a proximity relation on $(\mathbb{R},+, \cdot)$, defined by 
\[
\mathcal{W} \;\zeta\; \mathcal{K} \;\iff\; \mathcal{W} \subseteq \mathcal{K}
\]
Then, $(\mathbb{R},+, \cdot, \zeta)$ is a $\boldsymbol{P\mathcal{R}}_{\zeta}$. \newline\newline  
 Assume that $\mathcal{W}_{1} \times \mathcal{K}_{1}$, $\mathcal{W}_{2} \times \mathcal{K}_{2}$ are subsets of $\mathbb{R} \times \mathbb{R}$ such that $(\mathcal{W}_{1} \times \mathcal{K}_{1})$ $\zeta$ $(\mathcal{W}_{2} \times \mathcal{K}_{2})$, provided $\mathcal{W}_{1} \subseteq \mathcal{W}_{2}$ and $\mathcal{K}_{1} \subseteq \mathcal{K}_{2}$. Hence, $w_{1} + k_{1} \in \mathcal{W}_{2} + \mathcal{K}_{2}$ where $w_{1} + k_{1}$ is an element of $\mathcal{W}_{1} + \mathcal{K}_{1}$. It follows that, $\operatorname{add}_\zeta ( \mathcal{W}_{1} \times \mathcal{K}_{1})$ $\zeta$ $\operatorname{add}_\zeta ( \mathcal{W}_{2} \times \mathcal{K}_{2})$. Similarly, one obtains that the multiplicative map is \textbf{pro.con}. 
Now, consider $\mathcal{W},\mathcal{K}$ to be subsets of $\mathbb{R}$ such that $\mathcal{W}$ $\zeta$ $\mathcal{K}$. Therefore, $\mathcal{W} \subseteq \mathcal{K}$. Moreover, $-\mathcal{W} \subseteq -\mathcal{K}$. This implies that additive inverse map is \textbf{pro.con}.
\end{example}

\begin{definition}
Suppose that $(\mathcal{R},\boxplus, \boxtimes,\zeta)$ is a $\boldsymbol{P\mathcal{R}}_{\zeta}$. If $S \subseteq \mathcal{R}$ is a subring , then $(S,\boxplus, \boxtimes,\zeta)$ is said to be a proximal subring.
\end{definition}
Note that, If $S$ is a subring of $\mathcal{R}$, the additive, multiplicative, and additive inverse maps on $S$ are \textbf{pro.con}, 
being the restrictions of the respective maps defined on $\mathcal{R}$.

It follows from example ~\ref{Ex: real number} that, $(\mathbb{Z},+,\cdot,\zeta)$ is a proximal subring of $(\mathbb{R},+,\cdot,\zeta)$.
\begin{lemma} \label{lem:boxplus}
Let $(\mathcal{R}, \boxplus, \boxtimes, \zeta)$ be a $\boldsymbol{P\mathcal{R}}_{\zeta}$ and $c$ is an element of $\mathcal{R}$, implies that 
\begin{enumerate}
\item  $\phi_{c} : \mathcal{R} \mapsto \mathcal{R}, \phi_{c}(w)=w\boxplus c.$

\item $ \beta_{c} : \mathcal{R} \mapsto \mathcal{R}, \beta_{c}(w)=c\boxplus w.$ 
\end{enumerate}
are \textbf{pro.homo} maps.
\end{lemma}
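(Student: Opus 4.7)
The strategy is to realize $\phi_c$ and $\beta_c$ as compositions of the already-\textbf{pro.con} map $\operatorname{add}_\zeta$ with constant-slot insertion maps, use the fact (recalled after Definition~1) that a composition of \textbf{pro.con} maps is \textbf{pro.con}, and then exhibit explicit \textbf{pro.con} inverses.

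First I would introduce the auxiliary insertion maps $\iota^{R}_{c},\iota^{L}_{c}:\mathcal{R}\to\mathcal{R}\times\mathcal{R}$ by $\iota^{R}_{c}(w)=(w,c)$ and $\iota^{L}_{c}(w)=(c,w)$, and check that each is \textbf{pro.con}. The verification uses the product-proximity convention already employed in Example~\ref{Ex: real number}: if $\mathcal{W}\,\zeta\,\mathcal{K}$ in $\mathcal{R}$, then $\mathcal{W}\times\{c\}$ and $\mathcal{K}\times\{c\}$ satisfy the product relation (and likewise for the left slot), since the necessary factor-inclusions hold trivially on the constant slot.

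Second, I would observe the identities
\[
\phi_{c}=\operatorname{add}_{\zeta}\circ\iota^{R}_{c},\qquad
\beta_{c}=\operatorname{add}_{\zeta}\circ\iota^{L}_{c},
\]
so that $\phi_{c}$ and $\beta_{c}$ are \textbf{pro.con} as compositions of \textbf{pro.con} maps. Bijectivity is immediate from the ring axioms: the candidate inverse of $\phi_{c}$ is $\phi_{-c}$, since $\phi_{-c}(\phi_{c}(w))=(w\boxplus c)\boxplus(-c)=w$ and symmetrically, and likewise $\beta_{c}^{-1}=\beta_{-c}$. Applying the same compositional argument with $c$ replaced by $-c$ shows that both inverses are themselves \textbf{pro.con}, so $\phi_{c}$ and $\beta_{c}$ qualify as \textbf{pro.homo} in the sense of the preliminary definitions.

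The one genuinely non-routine step is the opening verification that $\iota^{R}_{c}$ and $\iota^{L}_{c}$ are \textbf{pro.con}, because the paper does not spell out a general axiomatization of the proximity on $\mathcal{R}\times\mathcal{R}$. I would handle this by appealing directly to the product-relation convention illustrated in Example~\ref{Ex: real number} (for which fixing a coordinate obviously preserves the relevant inclusions), and noting that for any product proximity compatible with the coordinate projections the constant-slot insertion is automatically \textbf{pro.con}. Once that is in hand, every remaining step is purely formal ring and composition bookkeeping.
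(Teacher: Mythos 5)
Your proposal is correct and follows essentially the same route as the paper's own proof: the paper likewise defines the insertion map $\alpha_{c}(w)=(w,c)$, checks it is \textbf{pro.con} via the product proximity, writes $\phi_{c}=\operatorname{add}_{\zeta}\circ\alpha_{c}$, and exhibits $\phi_{-c}$ as the \textbf{pro.con} inverse (with the $\beta_{c}$ case handled symmetrically). Your treatment is, if anything, slightly more careful in making the left-slot insertion and the product-proximity convention explicit.
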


\begin{proof}
Define $\alpha_{c} : \mathcal{R} \mapsto \mathcal{R} \times \mathcal{R}$ by $\alpha_{c}(w) = (w,c)$. Hence, $\alpha_{c}$ is $\textbf{pro.con}$. Since if $\mathcal{W},\mathcal{K}$ are near subsets of $\mathcal{R}$. Therefore, $(\mathcal{W},\{c\})$ is close to $(\mathcal{K},\{c\})$. It follows that $\alpha_{c}$ is $\textbf{pro.con}$. It should be observed that, $\phi_{c} = \operatorname{add}_\zeta \circ \alpha_{c}$, provided $\phi_{c}$ is $\textbf{pro.con}$. \newline
Moreover, $\phi_{-c}$ is also a $\textbf{pro.con}$ map. \newline
It remains to prove that $\phi_{c}(\phi_{-c}(w)) = \phi_{-c}(\phi_{c}(w)) = w$, which indeed holds.
The argument for $\beta_{c}$ is identical in structure to $\phi_{c}$.
\end{proof}

\begin{lemma}\label{lem:sigma}
If $(\mathcal{R},\boxplus,\boxtimes, \zeta)$ is a $\boldsymbol{P\mathcal{R}}_{\zeta}$ and $a \in \mathcal{R}$. Then, the following maps are $\textbf{pro.con}$
\[ \sigma_{a} : \mathcal{R} \mapsto \mathcal{R} \] defined by $\sigma_{a}(w) = a \boxtimes w.$ 
\[ \gamma_{a} : \mathcal{R} \mapsto \mathcal{R}\] is defined by $\gamma_{a}(w) = w \boxtimes a.$
\end{lemma}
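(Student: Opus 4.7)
The plan is to mirror the structure of the proof of Lemma \ref{lem:boxplus}, replacing the additive map with the multiplicative map $\operatorname{mul}_\zeta$, which is \textbf{pro.con} by the very definition of a $\boldsymbol{P\mathcal{R}}_{\zeta}$. The composition of \textbf{pro.con} maps is \textbf{pro.con}, so it suffices to exhibit $\sigma_a$ and $\gamma_a$ as such compositions.

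First I would define an auxiliary map $\eta_{a} : \mathcal{R} \to \mathcal{R} \times \mathcal{R}$ by $\eta_{a}(w) = (a, w)$ and check that it is \textbf{pro.con}. For near subsets $\mathcal{W}\,\zeta\,\mathcal{K}$ of $\mathcal{R}$, the pairs $(\{a\}, \mathcal{W})$ and $(\{a\}, \mathcal{K})$ are near in the product proximity on $\mathcal{R} \times \mathcal{R}$, exactly as in the proof of Lemma \ref{lem:boxplus} (this is really the same verification, with the constant coordinate placed on the left rather than the right). Then the factorization
\[
\sigma_{a} \;=\; \operatorname{mul}_\zeta \circ \eta_{a}
\]
immediately gives that $\sigma_{a}$ is \textbf{pro.con}.

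For $\gamma_{a}$, the argument is completely symmetric: define $\eta_{a}' : \mathcal{R} \to \mathcal{R} \times \mathcal{R}$ by $\eta_{a}'(w) = (w, a)$, verify \textbf{pro.con}\ in the same way, and observe that $\gamma_{a} = \operatorname{mul}_\zeta \circ \eta_{a}'$.

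There is no serious obstacle here; the lemma is genuinely a multiplicative analogue of Lemma \ref{lem:boxplus}. The one substantive difference I would flag is that, unlike $\phi_{c}$ and $\beta_{c}$, the maps $\sigma_{a}$ and $\gamma_{a}$ cannot in general be promoted to \textbf{pro.homo}: a ring element need not be a unit, so no two-sided inverse to $\sigma_{a}$ is available. This is why the statement weakens the conclusion from \textbf{pro.homo} to \textbf{pro.con}, and why there is nothing further to verify beyond the two composition arguments above.
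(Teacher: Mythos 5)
Your proof is correct and follows essentially the same route as the paper: the paper also factors $\sigma_{a} = \operatorname{mul}_\zeta \circ f_{a}$ with $f_{a}(w)=(a,w)$ (your $\eta_a$) and handles $\gamma_a$ symmetrically. Your extra remark about why the conclusion cannot be strengthened to \textbf{pro.homo} without invertibility matches the paper's subsequent Corollary~\ref{Cor:homo}.
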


\begin{proof}
To show that $\sigma_{a}$ is $\textbf{pro.con}$. Consider $f_{a}: \mathcal{R} \mapsto \mathcal{R} \times \mathcal{R}$ where $f_{a}(w) = (a,w)$. Hence, $f_{a}$ is $\textbf{pro.con}$. In addition, the composition of the multiplicative map  with $f_{a}$ as $\operatorname{mul}_\zeta \circ f_{a}$ this yields that $\sigma_{a}$ is  $\textbf{pro.con}$.\newline
By the same reasoning as above, $\gamma_{a}$ is also  $\textbf{pro.con}$.
\end{proof}
In particular, the maps $\sigma_{a},\gamma_{a}$  turn out to be $\textbf{pro.homo}$. If
$a$ has a multiplicative inverse, as observed in Corollary ~\ref{Cor:homo}.

\begin{corollary}\label{Cor:homo}
Given that $(\mathcal{R},\boxplus,\boxtimes, \zeta)$ is a $\boldsymbol{P\mathcal{R}}_{\zeta}$ with unity and $b$ is the multiplicative inverse of $a$. Then, $\sigma_{a}$ and $\gamma_{a}$ are $\textbf{pro.homo}.$
\end{corollary}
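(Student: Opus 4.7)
The plan is to exploit Lemma~\ref{lem:sigma} twice: once to say that $\sigma_{a}$ and $\gamma_{a}$ are already \textbf{pro.con}, and once, applied to $b$ in place of $a$, to provide a \textbf{pro.con} candidate for the inverse. Since \textbf{pro.homo} demands a bijection whose inverse is \textbf{pro.con}, the only real content is identifying the inverse and verifying bijectivity, which is purely algebraic.

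First I would apply Lemma~\ref{lem:sigma} with the element $b\in\mathcal{R}$ to conclude that the maps $\sigma_{b}(w) = b\boxtimes w$ and $\gamma_{b}(w) = w\boxtimes b$ are \textbf{pro.con}. Next I would compute, using associativity of $\boxtimes$, the presence of unity $1\in\mathcal{R}$, and the defining relations $a\boxtimes b = b\boxtimes a = 1$, the four compositions
\[
\sigma_{a}\circ\sigma_{b}(w) = a\boxtimes(b\boxtimes w) = (a\boxtimes b)\boxtimes w = w,
\]
and symmetrically $\sigma_{b}\circ\sigma_{a} = \operatorname{id}_{\mathcal{R}}$, together with the analogous identities $\gamma_{a}\circ\gamma_{b} = \gamma_{b}\circ\gamma_{a} = \operatorname{id}_{\mathcal{R}}$. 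This simultaneously shows that $\sigma_{a}$ and $\gamma_{a}$ are bijections and identifies their inverses as $\sigma_{b}$ and $\gamma_{b}$, respectively.

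Combining these two steps, $\sigma_{a}$ is a bijection, is \textbf{pro.con} by Lemma~\ref{lem:sigma}, and its inverse $\sigma_{a}^{-1} = \sigma_{b}$ is also \textbf{pro.con} by the same lemma; hence $\sigma_{a}$ is \textbf{pro.homo}. The argument for $\gamma_{a}$ is verbatim with $\gamma$ in place of $\sigma$.

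There is no genuine obstacle here: the entire proof is routine, and the only mild subtlety is making sure that the multiplicative inverse relation is two-sided so that both compositions collapse to the identity — a hypothesis already built into the phrase \emph{``$b$ is the multiplicative inverse of $a$.''} If one wished to weaken unity to a one-sided inverse, one would have to revisit this step, which would then become the sole nontrivial point.
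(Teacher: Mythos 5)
Your proposal is correct and matches the paper's own argument: both invoke Lemma~\ref{lem:sigma} to get that $\sigma_{b}$ (and $\gamma_{b}$) is \textbf{pro.con} and then verify $\sigma_{a}\circ\sigma_{b}=\sigma_{b}\circ\sigma_{a}=\operatorname{id}_{\mathcal{R}}$ using $a\boxtimes b=1_{\mathcal{R}}$. Your write-up is simply a slightly more explicit version of the same proof.
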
 
\begin{proof}
It can be observed that, $\sigma_{a^{-1}} = \sigma_{b}$ is $\textbf{pro.con}$. Moreover, 
\[ \sigma_a (\sigma_b (w)) = a \boxtimes b \boxtimes w = w \] where $a \boxtimes b = 1_\mathcal{R_{\zeta}}$.\newline\newline
In addition, 
\[ \sigma_b (\sigma_a (w)) = b \boxtimes a \boxtimes w = w .\] \newline
It can be seen that $\sigma_{a}$ is $\textbf{pro.homo}$. Similarly, $\gamma_{a}$ is $\textbf{pro.homo}$, as was shown for $\sigma_{a}$.
\end{proof}

\begin{theorem} \label{the:rightinvertible}
Assume that $(\mathcal{R}, \oplus, \otimes, \theta)$ is a $\boldsymbol{P\mathcal{R}}_{\theta}$ with unity. Let $\epsilon \in \mathcal{R}$ be a nonzero element and $\rho_{\epsilon} : \mathcal{R} \mapsto \mathcal{R}$ specified by $\rho_{\epsilon}(w)= \epsilon \otimes w$ is a $\textbf{pro.homo}$ map. Then, $\epsilon$ is a right invertible element.
\end{theorem}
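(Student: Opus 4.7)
The plan is to unpack the definition of $\textbf{pro.homo}$ and extract the algebraic consequence directly. By definition, a proximal homeomorphism is bijective (with proximally continuous inverse); all the topological structure is irrelevant for this particular statement. What matters is surjectivity.

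First I would observe that since $(\mathcal{R},\oplus,\otimes,\theta)$ is a proximal ring with unity, the element $1_{\mathcal{R}}$ lies in $\mathcal{R}$. Because $\rho_{\epsilon}$ is $\textbf{pro.homo}$, it is in particular surjective, so there exists some $b \in \mathcal{R}$ with $\rho_{\epsilon}(b) = 1_{\mathcal{R}}$. Unfolding the definition of $\rho_{\epsilon}$, this reads $\epsilon \otimes b = 1_{\mathcal{R}}$, which exhibits $b$ as a right multiplicative inverse of $\epsilon$.

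Equivalently, one can phrase this via the inverse map: set $b \assign \rho_{\epsilon}^{-1}(1_{\mathcal{R}})$, which exists because $\rho_{\epsilon}$ is bijective, and then apply $\rho_{\epsilon}$ to obtain $\epsilon \otimes b = 1_{\mathcal{R}}$. Either phrasing makes the conclusion immediate.

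There is essentially no obstacle here; the only point to be careful about is confirming that the paper's notion of $\textbf{pro.homo}$ genuinely entails bijectivity (and hence surjectivity onto $1_{\mathcal{R}}$), which is exactly what the definition stated earlier provides. The proximal continuity of $\rho_{\epsilon}$ and of its inverse are not used in the argument; only the set-theoretic bijection matters for producing the right inverse. Thus the proof should be a short two- or three-line argument.
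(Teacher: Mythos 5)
Your proposal is correct and follows essentially the same route as the paper: the paper also takes the inverse map $\hat\rho_{\epsilon}$ guaranteed by the \textbf{pro.homo} hypothesis, evaluates the identity $\rho_{\epsilon}(\hat\rho_{\epsilon}(w)) = w$ at $w = 1_{\mathcal{R}_{\theta}}$, and reads off $\epsilon \otimes \hat\rho_{\epsilon}(1_{\mathcal{R}_{\theta}}) = 1_{\mathcal{R}_{\theta}}$, which is exactly your $b \assign \rho_{\epsilon}^{-1}(1_{\mathcal{R}})$. Your observation that only surjectivity (not proximal continuity) is used is accurate and matches the structure of the paper's argument.
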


\begin{proof}
Due to the fact that $\rho_{\epsilon}$ is $\textbf{pro.homo}$, I have  a $\textbf{pro.con}$ map $\hat\rho_{\epsilon} : \mathcal{R} \mapsto \mathcal{R}$ such that 
\[ \rho_{\epsilon} \circ \hat\rho_{\epsilon} (w) = \hat\rho_{\epsilon} \circ \rho_{\epsilon} (w) = w\]
Consider the left-hand side $\rho_{\epsilon} (\hat\rho _{\epsilon}(w))= w$. Hence,

\begin{align*}
\rho_{\epsilon} (\hat\rho _{\epsilon}(1_{\mathcal{R}_{\theta}})) &= 1_{\mathcal{R}_{\theta}} \\ \epsilon  \otimes \hat\rho _{\epsilon}(1_{\mathcal{R}_{\theta}}) &= 1_{\mathcal{R}_{\theta}}
\end{align*}
Since $\hat\rho _{\epsilon}(1_{\mathcal{R}_{\theta}})\neq 0$. Thus, $\epsilon$ is right invertible.  
\end{proof}

\begin{theorem} \label{the:leftinvetible}
For a $\boldsymbol{P\mathcal{R}}_{\theta}$ with unity, if $\lambda$ is a nonzero element of $\mathcal{R}$ and $\psi_{\lambda}: \mathcal{R} \mapsto \mathcal{R}, \psi_{\lambda}(k) = k \otimes \lambda $ is a $\textbf{pro.homo}$, implies that $\lambda$ is a left invertible element.
\end{theorem}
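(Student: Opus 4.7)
The plan is to follow the proof of Theorem~\ref{the:rightinvertible} essentially verbatim, with the roles of left and right multiplication interchanged. Since $\psi_{\lambda}$ is assumed to be \textbf{pro.homo}, it admits a \textbf{pro.con} inverse $\hat\psi_{\lambda} : \mathcal{R} \mapsto \mathcal{R}$ satisfying
\[
\psi_{\lambda} \circ \hat\psi_{\lambda}(w) \;=\; \hat\psi_{\lambda} \circ \psi_{\lambda}(w) \;=\; w
\]
for every $w \in \mathcal{R}$. The existence of this two-sided pro.con inverse is the only information I will use about the proximity structure.

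Next I would specialise the identity $\psi_{\lambda}(\hat\psi_{\lambda}(w)) = w$ to $w = 1_{\mathcal{R}_{\theta}}$, unpack the definition of $\psi_{\lambda}$, and read off
\[
\hat\psi_{\lambda}(1_{\mathcal{R}_{\theta}}) \otimes \lambda \;=\; 1_{\mathcal{R}_{\theta}},
\]
which exhibits $\hat\psi_{\lambda}(1_{\mathcal{R}_{\theta}})$ as a left multiplicative inverse of $\lambda$. As a sanity check one should record that $\hat\psi_{\lambda}(1_{\mathcal{R}_{\theta}}) \neq 0$, since otherwise the left-hand side above would reduce to $0$, contradicting $1_{\mathcal{R}_{\theta}} \neq 0$; this justifies calling the element an actual inverse rather than a zero divisor accident.

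I do not expect a genuine obstacle here: the argument is the strict mirror image of the one used for right invertibility, with left multiplication by $\epsilon$ replaced by right multiplication by $\lambda$. The only point that requires a bit of care is maintaining the correct side of multiplication throughout, so that $\psi_{\lambda} \circ \hat\psi_{\lambda}$ (and not the other order) is the composition that yields $1_{\mathcal{R}_{\theta}} \otimes \lambda$-type expressions on the correct side. If one wished to emphasise the symmetry, one could alternatively observe that $\psi_{\lambda}$ on $(\mathcal{R},\oplus,\otimes,\theta)$ coincides with the $\rho$-map of Theorem~\ref{the:rightinvertible} applied to the opposite ring $(\mathcal{R},\oplus,\otimes^{\mathrm{op}},\theta)$, which still carries the same proximity $\theta$ and remains a $\boldsymbol{P\mathcal{R}}_{\theta}$; then right-invertibility in the opposite ring translates directly to left-invertibility in $\mathcal{R}$.
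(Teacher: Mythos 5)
Your proposal is correct and is essentially the paper's own argument: the paper proves this theorem by simply deferring to ``a similar argument as in Theorem~\ref{the:rightinvertible},'' and you have carried out exactly that mirror-image computation, using the \textbf{pro.con} inverse $\hat\psi_{\lambda}$ guaranteed by the \textbf{pro.homo} hypothesis and evaluating $\psi_{\lambda}(\hat\psi_{\lambda}(1_{\mathcal{R}_{\theta}}))=1_{\mathcal{R}_{\theta}}$ to exhibit $\hat\psi_{\lambda}(1_{\mathcal{R}_{\theta}})\otimes\lambda=1_{\mathcal{R}_{\theta}}$. The remark about the opposite ring is a nice additional observation but not needed.
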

\begin{proof}
By a similar argument as in Theorem ~\ref{the:rightinvertible}.
\end{proof}
 If a nonzero element  $\eta$ satisfies the conditions of Theorems  [\ref{the:rightinvertible}  ,\ref{the:leftinvetible}]. Therefore, $\eta$ has a two-sided inverse and is invertible.

 \begin{proposition}
 Let $\mu : \mathcal{R} \mapsto \mathcal{R}$ be a map on $(\mathcal{R},\boxplus,\boxtimes,\zeta)$ defined by \[\mu(r)=w \boxtimes r \boxtimes k\] where $w , k \in \mathcal{R}$, provided that $\mu$ is $\textbf{pro.con}$.
 \end{proposition}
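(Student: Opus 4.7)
The plan is to recognize $\mu$ as a composition of maps whose proximal continuity has already been established in Lemma \ref{lem:sigma}. Using associativity of $\boxtimes$, write $\mu(r) = w \boxtimes r \boxtimes k = (w \boxtimes r) \boxtimes k$, so that $\mu = \gamma_k \circ \sigma_w$, where $\sigma_w(r) = w \boxtimes r$ and $\gamma_k(r) = r \boxtimes k$ are the left- and right-multiplication maps.

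By Lemma \ref{lem:sigma}, both $\sigma_w$ and $\gamma_k$ are $\textbf{pro.con}$, since $w$ and $k$ are fixed elements of $\mathcal{R}$. The preliminaries already note that the composition of two $\textbf{pro.con}$ maps is again $\textbf{pro.con}$, so $\mu = \gamma_k \circ \sigma_w$ is $\textbf{pro.con}$.

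There is essentially no obstacle here: the only thing one needs to check is that the decomposition is valid, which is immediate from the associativity of $\boxtimes$ in the ring $\mathcal{R}$. The proof is therefore short, and its purpose is to record that the two-sided multiplication operator inherits proximal continuity from the one-sided multiplication maps of Lemma \ref{lem:sigma}. One could alternatively write $\mu = \sigma_w \circ \gamma_k$, which would yield the same conclusion by the same reasoning.
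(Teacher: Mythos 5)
Your proof is correct and follows essentially the same route as the paper: decompose $\mu$ as a composition of $\sigma_w$ and $\gamma_k$, invoke Lemma~\ref{lem:sigma} for their proximal continuity, and use closure of \textbf{pro.con} maps under composition. The paper writes the composition in the other order, $\sigma_w \circ \gamma_k$, but as you note, associativity makes both decompositions valid and the arguments are identical in substance.
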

 
 \begin{proof}
In view of the continuity of $\sigma_{w}$ and $\gamma_k$ in Lemma ~\ref{lem:sigma}. The composition
\[\sigma_{w} \circ \gamma_{k}\] equals  $\mu$. Hence, $\mu$ is $\textbf{pro.con}$
 \end{proof}
 \begin{proposition}
 Suppose that $(\mathcal{R},\oplus,\odot,\theta)$ is $\boldsymbol{P\mathcal{R}}_{\theta}$. Define
 \[ 
 \mathcal{G}_{\theta} : \mathcal{R} \times \mathcal{R} \mapsto \mathcal{R}, \quad \mathcal{G} (w_{0},w_{1})=w_{1} \odot w_{0}\] 
 Then, $\mathcal{G}$ is $\textbf{pro.con}$

\begin{proof}
Define the map $\mathcal{F}_{\theta} : \mathcal{R}\times\mathcal{R} \longrightarrow \mathcal{R}\times\mathcal{R}$ by 
\[ 
\mathcal{F}_{\theta}(\hat w_{0},\hat w_{1}) = (\hat w_{1},\hat w_{0}).\]
To show that $\mathcal{F}_{\theta}$ is $\textbf{pro.con}$. Let 
\[(\mathcal{W}_{1}\times\mathcal{W}_{2}) \,\theta\, (\mathcal{K}_{1}\times\mathcal{K}_{2}), \quad {then} \quad 
\mathcal{W}_{1}\,\theta\,\mathcal{K}_{1} \quad {and} \quad \mathcal{W}_{2}\,\theta\,\mathcal{K}_{2}\] \newline
Since $\mathcal{F}_{\theta}(\mathcal{W}_{1}\times\mathcal{W}_{2})=\mathcal{W}_{2}\times\mathcal{W}_{1}
\quad\text{and}\quad
\mathcal{F}_{\theta}(\mathcal{K}_{1}\times\mathcal{K}_{2})=\mathcal{K}_{2}\times\mathcal{K}_{1}$ \newline
By the definition of the product proximity, I have
\[
(\mathcal{W}_{2}\times\mathcal{W}_{1}) \,\theta\, (\mathcal{K}_{2}\times\mathcal{K}_{1}).
\]
Therefore, $\mathcal{F}_{\theta}$ is $\textbf{pro.con}$.
Moreover, since $\mathcal{G}_{\theta} = \operatorname{mul}_\theta \circ \mathcal{F}_{\theta}$. It follows that, $\mathcal{G}_{\theta}$ is $\textbf{pro.con}$.
\end{proof}

 \end{proposition}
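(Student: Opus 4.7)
The plan is to factor the map $\mathcal{G}_{\theta}$ through the coordinate-swap on $\mathcal{R}\times\mathcal{R}$ and then compose with the (already proximally continuous) multiplication map $\operatorname{mul}_{\theta}$. Concretely, I would introduce the auxiliary map $\mathcal{F}_{\theta}:\mathcal{R}\times\mathcal{R}\to\mathcal{R}\times\mathcal{R}$ defined by $\mathcal{F}_{\theta}(x,y)=(y,x)$ and observe the factorisation $\mathcal{G}_{\theta}=\operatorname{mul}_{\theta}\circ\mathcal{F}_{\theta}$. Since the composition of two \textbf{pro.con} maps is again \textbf{pro.con} (as noted in the preliminaries), once I establish that $\mathcal{F}_{\theta}$ is \textbf{pro.con} the conclusion is immediate from the definition of $\boldsymbol{P\mathcal{R}}_{\theta}$, which guarantees that $\operatorname{mul}_{\theta}$ is \textbf{pro.con}.

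The key step, then, is verifying that the swap map $\mathcal{F}_{\theta}$ is \textbf{pro.con} with respect to the product proximity on $\mathcal{R}\times\mathcal{R}$. Here I would invoke the standard product-proximity property: if $(\mathcal{W}_{1}\times\mathcal{W}_{2})\,\theta\,(\mathcal{K}_{1}\times\mathcal{K}_{2})$, then necessarily $\mathcal{W}_{1}\,\theta\,\mathcal{K}_{1}$ and $\mathcal{W}_{2}\,\theta\,\mathcal{K}_{2}$. Applying $\mathcal{F}_{\theta}$ gives the two rectangles $\mathcal{W}_{2}\times\mathcal{W}_{1}$ and $\mathcal{K}_{2}\times\mathcal{K}_{1}$, and the same product-proximity characterisation (in the reverse direction) yields $(\mathcal{W}_{2}\times\mathcal{W}_{1})\,\theta\,(\mathcal{K}_{2}\times\mathcal{K}_{1})$, so $\mathcal{F}_{\theta}$ preserves the proximity relation as required.

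The main obstacle I anticipate is purely bookkeeping: I should be careful that the behaviour of $\theta$ on product sets really is symmetric in the two factors. Once this is noted, no further ring-theoretic input is used; the argument is structurally parallel to the demonstrations in Lemmas \ref{lem:boxplus} and \ref{lem:sigma}, where the strategy was also to exhibit the map of interest as a composition involving a coordinate-insertion (or, here, a coordinate-swap) followed by one of the defining operations $\operatorname{add}_{\theta}$ or $\operatorname{mul}_{\theta}$. Thus the proof reduces to a short composition argument, with the product-proximity verification being the only non-trivial step.
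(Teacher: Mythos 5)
Your proposal is correct and follows essentially the same route as the paper: you define the coordinate-swap map $\mathcal{F}_{\theta}$, verify it is \textbf{pro.con} via the product-proximity characterisation of nearness of rectangles, and then write $\mathcal{G}_{\theta}=\operatorname{mul}_{\theta}\circ\mathcal{F}_{\theta}$ to conclude by closure of \textbf{pro.con} maps under composition. No substantive difference from the paper's argument.
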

 \begin{proposition} \label{prop:closed}
 Let $(\mathcal{R},\boxplus,\boxtimes,\zeta)$ be a $\boldsymbol{P\mathcal{R}}_{\zeta}$, and $\mathcal{F} \subseteq \mathcal{R}$ be a closed set. Then, $\forall \varepsilon \in \mathcal{R}, \varepsilon \boxplus \mathcal{F}$ is also closed.
 \end{proposition}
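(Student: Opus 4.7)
The plan is to realize $\varepsilon \boxplus \mathcal{F}$ as the image of $\mathcal{F}$ under the translation map $\beta_{\varepsilon}(w) = \varepsilon \boxplus w$, and then use Lemma~\ref{lem:boxplus} to conclude that this map is a \textbf{pro.homo}. Since its inverse $\beta_{-\varepsilon}$ is also \textbf{pro.con}, we can transport closure information between $\mathcal{F}$ and its translate via the proximity relation.

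Concretely, I would argue by showing $cl_{\zeta}(\varepsilon \boxplus \mathcal{F}) \subseteq \varepsilon \boxplus \mathcal{F}$, since the reverse inclusion is automatic. Let $x \in cl_{\zeta}(\varepsilon \boxplus \mathcal{F})$, which by the closure definition from the Preliminaries means $\{x\}\,\zeta\,(\varepsilon \boxplus \mathcal{F})$. Applying the \textbf{pro.con} map $\beta_{-\varepsilon}$ to both sides preserves the near relation, so
\[
\beta_{-\varepsilon}(\{x\}) \;\zeta\; \beta_{-\varepsilon}(\varepsilon \boxplus \mathcal{F}).
\]
The left side equals $\{-\varepsilon \boxplus x\}$, and the right side equals $\mathcal{F}$ by the group structure on $\mathcal{R}$ under $\boxplus$. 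Thus $\{-\varepsilon \boxplus x\}\,\zeta\,\mathcal{F}$, which means $-\varepsilon \boxplus x \in cl_{\zeta}(\mathcal{F}) = \mathcal{F}$ since $\mathcal{F}$ is closed. Adding $\varepsilon$ back gives $x = \varepsilon \boxplus (-\varepsilon \boxplus x) \in \varepsilon \boxplus \mathcal{F}$.

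The only step that needs a little care is confirming that $\beta_{-\varepsilon}(\varepsilon \boxplus \mathcal{F}) = \mathcal{F}$ as \emph{sets}, which follows from the elementwise identity $-\varepsilon \boxplus (\varepsilon \boxplus f) = f$ together with Definition~2 in the main section governing how $\boxplus$ acts on subsets. There is no real obstacle here: the key content is already packaged in Lemma~\ref{lem:boxplus}, and the proposition is essentially the statement that closed sets are preserved by a \textbf{pro.homo}, applied to the translation by $\varepsilon$. If desired, one could alternatively note that $\beta_{-\varepsilon}$ being \textbf{pro.con} means preimages of closed sets under $\beta_{-\varepsilon}$ are closed, and $\beta_{-\varepsilon}^{-1}(\mathcal{F}) = \varepsilon \boxplus \mathcal{F}$, which yields the same conclusion in a single line.
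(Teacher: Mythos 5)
Your proof is correct and follows essentially the same route as the paper: both use the translation map $\beta_{-\varepsilon}$ from Lemma~\ref{lem:boxplus} together with the fact that \textbf{pro.con} maps preserve nearness to show $cl_{\zeta}(\varepsilon \boxplus \mathcal{F}) \subseteq \varepsilon \boxplus \mathcal{F}$. The only cosmetic difference is that you unpack the paper's set-level inclusion $\hat{h}(cl_{\zeta}\mathcal{A}) \subseteq cl_{\zeta}(\hat{h}(\mathcal{A}))$ into an elementwise argument via the definition of $cl_{\zeta}$.
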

 
 \begin{proof}
 Since $\mathcal{F}$ is a closed set, I have  $cl_{\zeta} \mathcal{F}$ $= \mathcal{F}$. To show that \[\varepsilon \boxplus \mathcal{F} = \{\varepsilon \boxplus \mathfrak{f} : \mathfrak{f} \in \mathcal{F}\}\]  is  closed, note that  $(\varepsilon \boxplus \mathcal{F}) \subseteq cl_{\zeta} (\varepsilon \boxplus \mathcal{F})$. Let $\beta_ {-\varepsilon}: \mathcal{R} \mapsto \mathcal{R}$ be defined by \[\beta_ {-\varepsilon}(w)= -\varepsilon \boxplus w\] which is a $\textbf{pro.homo}$ as in Lemma ~\ref{lem:boxplus}. Since any $\textbf{pro.con}$ map $\hat{h}$ satisfies  \[\hat{h}(cl_{\zeta} \mathcal{A}) \subseteq cl_{\zeta} (\hat{h}(\mathcal{A}))\] Applying this to $\beta_ {-\varepsilon}$ and  $\varepsilon \boxplus \mathcal{F}$ yields
 \begin{align*}
 \beta_ {-\varepsilon}(cl_{\zeta}(\varepsilon\boxplus \mathcal{F})) \subseteq cl_\zeta (\beta_ {-\varepsilon}(\varepsilon \boxplus \mathcal{F})) \subseteq cl_{\zeta}(\mathcal{F}) = \mathcal{F} \\
 \beta_\varepsilon(\beta_ {-\varepsilon}(cl_{\zeta}(\varepsilon\boxplus \mathcal{F}))) \subseteq \beta_\varepsilon(\mathcal{F}) \\
 cl_{\zeta}(\varepsilon \boxplus \mathcal{F}) \subseteq \varepsilon \boxplus \mathcal{F}
 \end{align*}
 Accordingly, $\varepsilon \boxplus \mathcal{F}$ is closed.  
 \end{proof}
 \begin{corollary}
For a  $\boldsymbol{P\mathcal{R}}_{\zeta}$. If $\mathcal{F}$ is a closed subset of $\mathcal{R}$, implies $\forall \varepsilon \in \mathcal{R}, \varepsilon \boxminus \mathcal{F}$ is also closed.
 \end{corollary}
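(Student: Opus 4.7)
The plan is to reduce the corollary directly to Proposition \ref{prop:closed} by first establishing that the additive inverse of a closed set is closed. Interpreting $\varepsilon \boxminus \mathcal{F}$ as $\varepsilon \boxplus (-\mathcal{F}) = \{\varepsilon \boxplus (-\mathfrak{f}) : \mathfrak{f} \in \mathcal{F}\}$, the whole argument becomes a two-step reduction: closedness of $-\mathcal{F}$, followed by an application of the previous proposition with the set $-\mathcal{F}$ in place of $\mathcal{F}$.

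For the first step I would observe that the map $\operatorname{inv}_\zeta : \mathcal{R} \mapsto \mathcal{R}$, $w \mapsto -w$, is $\textbf{pro.con}$ by the very definition of a $\boldsymbol{P\mathcal{R}}_{\zeta}$, and that it is its own inverse since $-(-w)=w$; consequently $\operatorname{inv}_\zeta$ is a $\textbf{pro.homo}$. Using the same closure-inclusion property invoked in the proof of Proposition \ref{prop:closed}, namely $\hat h(cl_\zeta \mathcal{A}) \subseteq cl_\zeta(\hat h(\mathcal{A}))$ for any $\textbf{pro.con}$ map $\hat h$, and applying it both to $\operatorname{inv}_\zeta$ and to its inverse $\operatorname{inv}_\zeta$ itself, I would sandwich $cl_\zeta(-\mathcal{F})$ between $-\mathcal{F}$ and itself, concluding that $-\mathcal{F}$ is closed whenever $\mathcal{F}$ is.

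With $-\mathcal{F}$ now known to be a closed subset of $\mathcal{R}$, the second step is immediate: Proposition \ref{prop:closed} applied to the closed set $-\mathcal{F}$ and the element $\varepsilon$ yields that $\varepsilon \boxplus (-\mathcal{F})$ is closed, which is precisely $\varepsilon \boxminus \mathcal{F}$.

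I do not expect a genuine obstacle here; the only subtle point is making sure that the closure-preservation argument for $-\mathcal{F}$ is phrased carefully enough that one truly uses $\textbf{pro.homo}$ and not just $\textbf{pro.con}$, since a one-sided inclusion would not be enough to conclude closedness on its own. Once that is done, the corollary is a clean composition of the two preceding results.
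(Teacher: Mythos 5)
Your proof is correct and follows the same route as the paper, which simply states that the corollary ``follows from Proposition \ref{prop:closed}.'' In fact you supply the intermediate step the paper leaves implicit --- that $-\mathcal{F}$ is closed because $\operatorname{inv}_\zeta$ is an involutive \textbf{pro.con} map, hence a \textbf{pro.homo} --- so your version is a more complete rendering of the intended argument.
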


 \begin{proof}
The result follows from Proposition ~\ref{prop:closed}. 
 \end{proof}
 \begin{proposition}
 Suppose that $(\mathcal{R},\boxplus,\boxtimes,\zeta)$ is a $\boldsymbol{P\mathcal{R}}_{\zeta}$ with unity. If $\lambda \in \mathcal{R}$ is an invertible element, and $\mathcal{V}$ is a closed subset of $\mathcal{R}$. Then, $\lambda \boxtimes \mathcal{V}$ and $\mathcal{V} \boxtimes \lambda$ are closed.
 \end{proposition}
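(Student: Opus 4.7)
The plan is to mimic almost verbatim the proof of Proposition~\ref{prop:closed}, replacing the additive translations $\beta_{\pm\varepsilon}$ by the multiplicative scalings $\sigma_{\lambda},\sigma_{\lambda^{-1}}$ (for the left multiplication case) and $\gamma_{\lambda},\gamma_{\lambda^{-1}}$ (for the right multiplication case). Since $\lambda$ is invertible with two-sided inverse $\lambda^{-1}$, Corollary~\ref{Cor:homo} tells us that $\sigma_{\lambda}$ and $\gamma_{\lambda}$ are \textbf{pro.homo} maps, so in particular $\sigma_{\lambda^{-1}}$ and $\gamma_{\lambda^{-1}}$ are \textbf{pro.con}, and they are honest two-sided inverses of $\sigma_{\lambda}$ and $\gamma_{\lambda}$ respectively. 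This is exactly the structural fact that made the additive argument work.

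For the left-multiplication case, I would start from the trivial inclusion $\lambda\boxtimes\mathcal{V}\subseteq cl_{\zeta}(\lambda\boxtimes\mathcal{V})$ and then apply the general property (already invoked in Proposition~\ref{prop:closed}) that any \textbf{pro.con} map $\hat h$ satisfies $\hat h(cl_{\zeta}\mathcal{A})\subseteq cl_{\zeta}(\hat h(\mathcal{A}))$. Taking $\hat h=\sigma_{\lambda^{-1}}$ and $\mathcal{A}=\lambda\boxtimes\mathcal{V}$ yields
\begin{align*}
\sigma_{\lambda^{-1}}\bigl(cl_{\zeta}(\lambda\boxtimes\mathcal{V})\bigr)
&\subseteq cl_{\zeta}\bigl(\sigma_{\lambda^{-1}}(\lambda\boxtimes\mathcal{V})\bigr)
= cl_{\zeta}(\mathcal{V}) = \mathcal{V}.
\end{align*}
Applying $\sigma_{\lambda}$ to both sides and using $\sigma_{\lambda}\circ\sigma_{\lambda^{-1}}=\operatorname{id}_{\mathcal{R}}$ (from Corollary~\ref{Cor:homo}) gives $cl_{\zeta}(\lambda\boxtimes\mathcal{V})\subseteq\lambda\boxtimes\mathcal{V}$, hence equality, so $\lambda\boxtimes\mathcal{V}$ is closed.

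For $\mathcal{V}\boxtimes\lambda$, I would run the identical argument with $\gamma_{\lambda}$ and $\gamma_{\lambda^{-1}}$ in place of $\sigma_{\lambda}$ and $\sigma_{\lambda^{-1}}$; again the crucial identity $\gamma_{\lambda}\circ\gamma_{\lambda^{-1}}=\operatorname{id}_{\mathcal{R}}$ is guaranteed by the two-sided invertibility of $\lambda$ and the associativity of $\boxtimes$.

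I do not anticipate a genuine obstacle: the only point to be mildly careful about is that the cancellation $\sigma_{\lambda}\circ\sigma_{\lambda^{-1}}=\operatorname{id}$ really requires a two-sided inverse (both $\lambda\boxtimes\lambda^{-1}=1_{\mathcal{R}}$ and $\lambda^{-1}\boxtimes\lambda=1_{\mathcal{R}}$), which is exactly the hypothesis ``$\lambda$ is invertible.'' If one only had right-invertibility, one of the two inclusions would degrade and the argument would fail; this is what forces the invertibility assumption rather than the weaker ones appearing in Theorems~\ref{the:rightinvertible} and~\ref{the:leftinvetible}.
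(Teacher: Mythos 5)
Your proposal is correct and follows essentially the same route as the paper's own proof, which likewise applies the closure inclusion $\hat h(cl_{\zeta}\mathcal{A})\subseteq cl_{\zeta}(\hat h(\mathcal{A}))$ to the \textbf{pro.con} map induced by $\lambda^{-1}$ and then cancels by multiplying back with $\lambda$, invoking Corollary~\ref{Cor:homo}. Your explicit remark that two-sided invertibility is what makes the cancellation work is a welcome clarification but not a departure from the paper's argument.
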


 \begin{proof}
 Define \[ \lambda \boxtimes \mathcal{V} = \{\lambda \boxtimes  \mathfrak{v} : \mathfrak{v} \in \mathcal{V}\}\]
 Assume that $\alpha _{\lambda^{-1}} : \mathcal{R} \mapsto \mathcal{R}$ is a $\textbf{pro.homo}$ map as in Corollary ~\ref{Cor:homo}. It follows that
 \[ \alpha_{\lambda^{-1}} (cl_{\zeta} (\lambda \boxtimes \mathcal{V})) \subseteq cl_{\zeta}(\alpha_{\lambda^{-1}}(\lambda \boxtimes \mathcal{V})) \subseteq cl_{\zeta} (\mathcal{V}) = \mathcal{V} \]
  Applying  $\alpha_{\lambda}$ to  both sides yields
 \[ \alpha_{\lambda} (\alpha_{\lambda^{-1}} (cl_{\zeta} (\lambda \boxtimes \mathcal{V}))) \subseteq \alpha_{\lambda} (\mathcal{V})
 \]
 Consequently, \[cl_{\zeta}(\lambda \boxtimes \mathcal{V}) = \lambda \boxtimes \mathcal{V}\]
 which shows that $\lambda \boxtimes \mathcal{V}$ is closed. By a symmetric argument, $\mathcal{V} \boxtimes \lambda$ is also closed.
 \end{proof}
\begin{proposition}
Let $(\mathcal{R}_{1}, \boxplus, \boxtimes, \zeta)$ and $(\mathcal{R}_{2}, \oplus, \otimes, \theta)$ be two proximal rings. Then, their direct product $\mathcal{R}_{1}\times \mathcal{R}_{2}$ forms a proximal ring, denoted by $\boldsymbol{P\mathcal{R}}_{\zeta,\theta}$
\end{proposition}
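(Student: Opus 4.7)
The plan is to equip $\mathcal{R}_1\times\mathcal{R}_2$ with the componentwise ring operations (call them $\boxplus\oplus$, $\boxtimes\otimes$, and the coordinatewise additive inverse) and with the product proximity $\zeta\times\theta$, under which $(\mathcal{A}_1\times\mathcal{A}_2)\,(\zeta\times\theta)\,(\mathcal{B}_1\times\mathcal{B}_2)$ holds exactly when $\mathcal{A}_1\,\zeta\,\mathcal{B}_1$ and $\mathcal{A}_2\,\theta\,\mathcal{B}_2$. Once this data is in place, the work consists of verifying that the three structural maps (addition, multiplication, additive inverse) on the product are $\textbf{pro.con}$, since the ring axioms pass trivially to the direct product.

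First, for the additive inverse $\operatorname{inv}_{\zeta,\theta}(w_1,w_2)=(-w_1,-w_2)$, I would note that it is just the product map $\operatorname{inv}_\zeta\times\operatorname{inv}_\theta$. A pair of $\textbf{pro.con}$ maps assembled in this way is $\textbf{pro.con}$ with respect to the product proximities, since it preserves nearness in each coordinate separately, and the definition of $\zeta\times\theta$ reduces nearness of product sets to coordinatewise nearness.

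Next, for $\operatorname{add}_{\zeta,\theta}$ and $\operatorname{mul}_{\zeta,\theta}$, the principal obstacle is that the domain $(\mathcal{R}_1\times\mathcal{R}_2)\times(\mathcal{R}_1\times\mathcal{R}_2)$ does not directly line up with $(\mathcal{R}_1\times\mathcal{R}_1)\times(\mathcal{R}_2\times\mathcal{R}_2)$, so I would first introduce the shuffle map
\[
\Sigma:(\mathcal{R}_1\times\mathcal{R}_2)\times(\mathcal{R}_1\times\mathcal{R}_2)\longrightarrow(\mathcal{R}_1\times\mathcal{R}_1)\times(\mathcal{R}_2\times\mathcal{R}_2),\quad \Sigma((a,b),(a',b'))=((a,a'),(b,b')),
\]
and show that $\Sigma$ is $\textbf{pro.con}$ by essentially the same coordinate-tracking argument used for $\mathcal{F}_\theta$ in the preceding proposition: a product of products of near sets is near in $\zeta\times\theta\times\zeta\times\theta$ iff the appropriately paired factors are near, so rearranging the order of factors preserves nearness.

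With $\Sigma$ available, I would factor
\[
\operatorname{add}_{\zeta,\theta}=(\operatorname{add}_\zeta\times\operatorname{add}_\theta)\circ\Sigma,\qquad \operatorname{mul}_{\zeta,\theta}=(\operatorname{mul}_\zeta\times\operatorname{mul}_\theta)\circ\Sigma,
\]
and conclude by invoking the fact, noted early in the preliminaries, that compositions of $\textbf{pro.con}$ maps are $\textbf{pro.con}$, together with the coordinatewise principle used above for $\operatorname{inv}_{\zeta,\theta}$. The ring-theoretic checks (closure, associativity, distributivity) are inherited factorwise and need only a line of comment. I expect the only nontrivial step to be the verification that $\Sigma$ is $\textbf{pro.con}$; once that rearrangement is clear, the remaining assertions reduce to bookkeeping.
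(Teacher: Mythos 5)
Your proposal is correct and follows the same basic strategy as the paper: put the componentwise operations and the product proximity on $\mathcal{R}_1\times\mathcal{R}_2$ and check that the three structural maps are $\textbf{pro.con}$ coordinatewise. The one place you go further is the explicit factorization $\operatorname{add}_{\zeta,\theta}=(\operatorname{add}_\zeta\times\operatorname{add}_\theta)\circ\Sigma$ through the shuffle map $\Sigma((a,b),(a',b'))=((a,a'),(b,b'))$. The paper skips this step and simply asserts that the ``product map'' of the two coordinate additions is $\textbf{pro.con}$; as a consequence its displayed formula actually pairs the coordinates incorrectly (it writes $\operatorname{add}_\zeta(w_1,w_2)$, feeding one argument from each factor ring into the addition of $\mathcal{R}_1$). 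Your version repairs exactly that: $\Sigma$ is the rearrangement that makes the domain $(\mathcal{R}_1\times\mathcal{R}_2)\times(\mathcal{R}_1\times\mathcal{R}_2)$ line up with $(\mathcal{R}_1\times\mathcal{R}_1)\times(\mathcal{R}_2\times\mathcal{R}_2)$, and your justification that $\Sigma$ is $\textbf{pro.con}$ by the same coordinate-tracking argument as the swap map $\mathcal{F}_\theta$ in the preceding proposition is the right one. So what your route buys is a complete and correctly typed argument at the cost of one extra lemma; what the paper's route buys is brevity at the cost of eliding (and slightly garbling) the coordinate bookkeeping.
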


\begin{proof}
To show that $\mathcal{R}_{1}\times \mathcal{R}_{2}$ is $\boldsymbol{P\mathcal{R}}_{\zeta,\theta}$. Define the map 
\[
\operatorname{add}_{\zeta,\theta} : (\mathcal{R}_{1} \times \mathcal{R}_{2} ) \times (\mathcal{R}_{1}\times \mathcal{R}_{2}) \mapsto (\mathcal{R}_{1} \times \mathcal{R}_{2} )\]
by \[
\operatorname{add}_{\zeta,\theta} ((w_{1},w_{2}),(\hat{w_{1}},\hat{w_{2}}))= (\operatorname{add}_{\zeta}(w_{1},w_{2}), \operatorname{add}_{\theta} (\hat{w_{1}},\hat{w_{2}}))\]
Since both coordinate maps $\operatorname{add}_{\zeta}$ and $\operatorname{add}_{\theta}$ are $\textbf{pro.con}$, their product map $\operatorname{add}_{\zeta,\theta}$ is $\textbf{pro.con}$ with respect to the product proximity $\zeta \times \theta$. \newline
By applying the same reasoning, one verifies that the $\operatorname{mul}_{\zeta,\theta}$  and $\operatorname{inv}_{\zeta,\theta}$ maps are \textbf{pro.con}. Therefore, the direct product $\mathcal{R}_{1} \times \mathcal{R}_{2} $ is itself a $\boldsymbol{P\mathcal{R}}_{\zeta,\theta}$.
\end{proof}
\begin{corollary}\label{cor:directproduct}
The direct product of a finite family of proximal rings, is a proximal ring.
\end{corollary}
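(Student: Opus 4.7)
The plan is to proceed by induction on the size $n$ of the family, leveraging the preceding proposition about the direct product of two proximal rings as both the base case and the engine of the inductive step. Let $\{(\mathcal{R}_i,\boxplus_i,\boxtimes_i,\zeta_i)\}_{i=1}^{n}$ denote the finite family of proximal rings.

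First I would establish the base case $n=2$, which is precisely the previous proposition, so no new work is required. I would also explicitly note the trivial case $n=1$, where the statement reduces to the hypothesis itself. For the inductive step, assume that the direct product $\mathcal{R}_1 \times \cdots \times \mathcal{R}_{n-1}$, equipped with the product proximity $\zeta_1 \times \cdots \times \zeta_{n-1}$ and the coordinate-wise ring operations, is a proximal ring $\boldsymbol{P\mathcal{R}}_{\zeta_1,\ldots,\zeta_{n-1}}$.

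Next I would use the natural identification
\[
\mathcal{R}_1 \times \cdots \times \mathcal{R}_n \;\cong\; \bigl(\mathcal{R}_1 \times \cdots \times \mathcal{R}_{n-1}\bigr) \times \mathcal{R}_n,
\]
which is a ring isomorphism compatible with the product proximity, since the product proximity on $n$ factors coincides with the iterated product of the product proximity on $n-1$ factors with $\zeta_n$. Applying the previous proposition to the two proximal rings $\mathcal{R}_1 \times \cdots \times \mathcal{R}_{n-1}$ and $\mathcal{R}_n$ immediately yields that the right-hand side is a proximal ring, hence so is $\mathcal{R}_1 \times \cdots \times \mathcal{R}_n$. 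This concludes the induction.

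The proof is essentially bookkeeping: the only potential obstacle is verifying that the product proximity is genuinely associative in the sense that $\zeta_1 \times \cdots \times \zeta_n$ agrees with $(\zeta_1 \times \cdots \times \zeta_{n-1}) \times \zeta_n$, but this is a standard property of product proximity spaces and the three structure maps $\operatorname{add}$, $\operatorname{mul}$, $\operatorname{inv}$ on the $n$-fold product decompose coordinate-wise exactly as they do on the iterated product. Thus the proximal continuity of each structure map transfers directly from the inductive hypothesis and the base case.
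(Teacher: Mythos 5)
Your proof is correct and follows the same route as the paper, which simply states that the result follows by induction on the number of factors using the two-factor proposition. Your write-up merely makes explicit the bookkeeping (the identification of the $n$-fold product with an iterated binary product and the compatibility of the product proximity) that the paper leaves implicit.
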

\begin{proof}
The result follows directly by induction method.
\end{proof}
\begin{definition}
Let $(\mathcal{R}, \boxplus,\boxtimes)$ be a  field endowed with a proximity relation $\zeta$. Then, $(\mathcal{R},\boxplus, \boxtimes, \zeta)$ is called a proximal field,  denoted by $\boldsymbol{P\mathcal{F}}_{\zeta}$, if the maps $\operatorname{add}_\zeta$, $\operatorname{mul}_\zeta$ , and $\operatorname{inv}_\zeta$ are $\textbf{pro.con}$. Moreover, the inversion map
\[\colorbox{green!20}{\(\operatorname{inv^{-1}_{\zeta}} : \mathcal{R}^* \mapsto \mathcal{R}^*, \ \operatorname{inv^{-1}}(w) = w^{-1}\)}\]

is $\textbf{pro.con}$.
\end{definition}
\begin{example}
Let $(\mathbb{R},+,\cdot)$ be a field  equipped with the proximity relation $\zeta$, defined by 
\[ \mathcal{W} \;\; \zeta \;\; \mathcal{K} \;\;\iff\;\; D(\mathcal{W}, \;\mathcal{K}) = 0\]
where $D(\mathcal{W},\mathcal{K})= 
\left\{ \, \lvert \mathfrak{w} - \mathfrak{k} \rvert \;\middle|\; \mathfrak{w} \in \mathcal{W}, \;\mathfrak{k} \in \mathcal{K} \, \right\}$ denotes the Euclidean distance on $\mathbb{R}$. Then, $(\mathbb{R},+,\cdot,\zeta)$ is a $\boldsymbol{P\mathcal{F}}_{\zeta}$. \newline\newline
Since $\operatorname{add}_{\zeta}$, $\operatorname{mul}_{\zeta}$, and $\operatorname{inv}_{\zeta}$ are $\textbf{pro.con}$ on $\mathbb{R}$, it only remains to verify that the inversion map \[\operatorname{inv ^{-1}_{\zeta}} : \mathcal{R^{*}} \mapsto \mathcal{R^{*}}, \operatorname{inv ^{-1}}(w)=w^{-1}\]
\newline\newline
Is $\textbf{pro.con}$, consider $\mathcal{W}$ $\zeta$ $\mathcal{K}$, then $D(\mathcal{W},\mathcal{K}) = 0$. It follows that, there exist elements $\mathfrak{w} \in \mathcal{W}$ and $\mathfrak{k} \in \mathcal{K}$ such that \[\abs{\mathfrak{w} - \mathfrak{k}} = 0\] Hence, $\mathfrak{w} = \mathfrak{k}$, and therefore $\mathfrak{w^{-1}} = \mathfrak{k^{-1}}$. Consequently,
\[\mathcal{W}^{-1} \cap \mathcal{K}^{-1} \neq \phi\] This implies that, \
\[\operatorname{inv^{-1}_{\zeta}}(\mathcal{W}) \;\zeta\; \operatorname{inv^{-1}_{\zeta}}(\mathcal{K})\] 
Thus, $\operatorname{inv ^{-1}_{\zeta}}$ is \textbf{pro.con}. 
\end{example}
\begin{definition}
If $\mathcal{E}$ is an $\mathcal{R}-module$ and $(\mathcal{R},\boxplus,\boxtimes,\zeta)$ is a $\boldsymbol{P\mathcal{R}}_{\zeta}$. Then, $(\mathcal{E},\boxplus,\boxtimes,\zeta)$ is called a proximal $\mathcal{R}-module$, denoted by $\boldsymbol{P\mathcal{M}}_{\zeta}$, provided the following maps are $\textbf{pro.con}$:
\begin{enumerate}

\item [1.] $\operatorname{add}_\zeta ^{\mathcal{E}}: \mathcal{E} \times \mathcal{E} \mapsto \mathcal{E}, 
 \quad \boxed {\operatorname{add}_\zeta^{\mathcal{E}}(e_{0}, e_{1}) = e_{0} \boxplus e_{1}.} $
 
\item [2.] $\operatorname{mul}_\zeta ^{\mathcal{E}}: \mathcal{R} \times \mathcal{E} \mapsto \mathcal{E}, \quad  \boxed {\operatorname{mul}_\zeta ^{\mathcal{E}}(r, e) = r \boxtimes e. }$

\item [3.]$ \operatorname{inv}_\zeta ^{\mathcal{E}}: \mathcal{E} \mapsto \mathcal{E}, \quad \boxed {\operatorname{inv}_\zeta^{\mathcal{E}}(e) = -e.}$
\end{enumerate}
\end{definition}

The following lemma for proximal modules $\boldsymbol{P\mathcal{M}}_{\zeta}$ follows directly from Lemma \ref{lem:sigma} and Corollary \ref{Cor:homo} on proximal rings $\boldsymbol{P\mathcal{R}}_{\zeta}$.

\begin{lemma}
Let $\mathcal{E}$ be a $\boldsymbol{P\mathcal{M}}_{\zeta}$. Then, the following statements are hold:
\begin{enumerate}
\item[1.]  The map $\alpha _{\epsilon} : \mathcal{R} \mapsto \mathcal{E}$, defined by 
\[\alpha_{\epsilon}(w) = w \boxtimes \epsilon , \epsilon \in \mathcal{E}\]
is $\textbf{pro.con}$ \newline

\item[2.] $\forall \mathfrak{r} \in \mathcal{R}$, $\beta_{\mathfrak{r}} : \mathcal{E} \mapsto \mathcal{E}$ 
\[
\beta_{\mathfrak{r}}(e) = \mathfrak{r} \boxtimes e
\]
is $\textbf{pro.con}$. Moreover, if $\mathfrak{r}$ is invertible. Then, $\beta_{\mathfrak{r}}$ is $\textbf{pro.homo}$.
\end{enumerate}
\end{lemma}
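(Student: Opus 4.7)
The plan is to mirror the proofs of Lemma~\ref{lem:sigma} and Corollary~\ref{Cor:homo} essentially verbatim, with the ring multiplication $\operatorname{mul}_{\zeta}$ replaced by the scalar action $\operatorname{mul}_{\zeta}^{\mathcal{E}}$ defined in the $\boldsymbol{P\mathcal{M}}_{\zeta}$ structure.

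For (1), I would introduce the auxiliary map $\hat f_{\epsilon} : \mathcal{R} \mapsto \mathcal{R} \times \mathcal{E}$ given by $\hat f_{\epsilon}(w) = (w,\epsilon)$ and check that it is \textbf{pro.con}: if $\mathcal{W}\,\zeta\,\mathcal{K}$ in $\mathcal{R}$, then $(\mathcal{W},\{\epsilon\})$ is near $(\mathcal{K},\{\epsilon\})$ in the product proximity on $\mathcal{R} \times \mathcal{E}$, which is exactly what $\hat f_{\epsilon}(\mathcal{W})\,\zeta\,\hat f_{\epsilon}(\mathcal{K})$ asserts. Since $\alpha_{\epsilon} = \operatorname{mul}_{\zeta}^{\mathcal{E}} \circ \hat f_{\epsilon}$ and compositions of \textbf{pro.con} maps are \textbf{pro.con}, the first claim follows.

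For the first half of (2), the argument is symmetric: the auxiliary map $\hat g_{\mathfrak{r}} : \mathcal{E} \mapsto \mathcal{R} \times \mathcal{E}$ given by $\hat g_{\mathfrak{r}}(e) = (\mathfrak{r},e)$ is \textbf{pro.con} for the identical reason, and $\beta_{\mathfrak{r}} = \operatorname{mul}_{\zeta}^{\mathcal{E}} \circ \hat g_{\mathfrak{r}}$. For the invertibility clause, when $\mathfrak{r}$ admits an inverse $\mathfrak{r}^{-1}$ in $\mathcal{R}$, the same step already gives that $\beta_{\mathfrak{r}^{-1}}$ is \textbf{pro.con}. Using the $\mathcal{R}$-module compatibility axioms $\mathfrak{r} \boxtimes (\mathfrak{r}^{-1} \boxtimes e) = (\mathfrak{r} \boxtimes \mathfrak{r}^{-1}) \boxtimes e = 1_{\mathcal{R}} \boxtimes e = e$, and its mirror image, the maps $\beta_{\mathfrak{r}}$ and $\beta_{\mathfrak{r}^{-1}}$ are mutual inverses, so $\beta_{\mathfrak{r}}$ is a bijection with \textbf{pro.con} inverse, hence \textbf{pro.homo}.

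The only subtle step is the verification that the auxiliary maps $\hat f_{\epsilon}$ and $\hat g_{\mathfrak{r}}$ are \textbf{pro.con}, because their codomain $\mathcal{R} \times \mathcal{E}$ carries a product proximity built from two \emph{distinct} proximity relations — the ring proximity on $\mathcal{R}$ and the module proximity on $\mathcal{E}$ — rather than from a single proximity as in Lemma~\ref{lem:sigma}. However, as the definition of $\boldsymbol{P\mathcal{M}}_{\zeta}$ already presumes $\operatorname{mul}_{\zeta}^{\mathcal{E}}$ to be \textbf{pro.con} with respect to exactly this product proximity, no additional hypothesis is required and the triviality $\{\epsilon\}\,\zeta\,\{\epsilon\}$ (respectively $\{\mathfrak{r}\}\,\zeta\,\{\mathfrak{r}\}$) suffices. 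The remainder of the argument is formal bookkeeping transcribed from the proximal-ring case.
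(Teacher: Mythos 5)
Your proposal is correct and follows essentially the same route as the paper, which gives no separate proof but simply remarks that the lemma ``follows directly from Lemma~\ref{lem:sigma} and Corollary~\ref{Cor:homo}''; your argument is exactly that adaptation, written out with the constant-coordinate auxiliary maps and the composition with $\operatorname{mul}_{\zeta}^{\mathcal{E}}$. Your explicit attention to the product proximity on $\mathcal{R}\times\mathcal{E}$ and to the unital module identity $1_{\mathcal{R}}\boxtimes e=e$ supplies detail the paper leaves implicit, but introduces no new idea.
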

\begin{proposition}
Let $\{\mathscr{A}_{j} : j \in J\}$ be a finite family of proximal $\mathcal{R}-modules$. Then, their direct product $\prod_{j \in J} \mathscr{A}_{j}$ is also a proximal $\mathcal{R}-module$.
\end{proposition}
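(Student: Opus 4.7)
The plan is to mirror the structure of the earlier proof for the direct product of proximal rings (the proposition preceding Corollary \ref{cor:directproduct}) and then conclude by induction, since the index set $J$ is finite.

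First I would handle the base case $|J|=2$, with $\mathscr{A}_{1}$ and $\mathscr{A}_{2}$ equipped with proximities $\zeta_{1}$ and $\zeta_{2}$, and form the product proximity $\zeta_{1}\times\zeta_{2}$ on $\mathscr{A}_{1}\times\mathscr{A}_{2}$. I would then define the three structure maps coordinatewise:
\begin{align*}
\operatorname{add}^{\mathscr{A}_{1}\times\mathscr{A}_{2}}\bigl((e_{1},e_{2}),(e_{1}',e_{2}')\bigr) &= \bigl(e_{1}\boxplus_{1} e_{1}',\, e_{2}\boxplus_{2} e_{2}'\bigr), \\
\operatorname{mul}^{\mathscr{A}_{1}\times\mathscr{A}_{2}}\bigl(r,(e_{1},e_{2})\bigr) &= \bigl(r\boxtimes_{1} e_{1},\, r\boxtimes_{2} e_{2}\bigr), \\
\operatorname{inv}^{\mathscr{A}_{1}\times\mathscr{A}_{2}}(e_{1},e_{2}) &= (-e_{1},-e_{2}).
\end{align*}
Since each coordinate map is \textbf{pro.con} by assumption on $\mathscr{A}_{1}$ and $\mathscr{A}_{2}$, and since the product proximity is defined precisely so that componentwise proximal continuity lifts to proximal continuity of the product map, each of the three maps above is \textbf{pro.con}. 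This step is essentially the same verification already carried out in the proof of the direct-product proposition for proximal rings, only with the scalar-multiplication map $\mathcal{R}\times\mathcal{E}\to\mathcal{E}$ in place of the internal ring multiplication.

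Next I would invoke induction on $|J|$, exactly in the style of Corollary \ref{cor:directproduct}. Assuming the result holds for a family of size $n$, write $\prod_{j=1}^{n+1}\mathscr{A}_{j}\cong\bigl(\prod_{j=1}^{n}\mathscr{A}_{j}\bigr)\times\mathscr{A}_{n+1}$ and apply the base case, using that the natural bijection between these two products is a \textbf{pro.homo} with respect to the obvious product proximities.

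The only mildly delicate point is the scalar-multiplication map, whose domain $\mathcal{R}\times\prod_{j}\mathscr{A}_{j}$ is not a product of the $\mathscr{A}_{j}$'s but pairs a single copy of $\mathcal{R}$ with the module product. I would handle this by noting that $\operatorname{mul}^{\prod\mathscr{A}_{j}}$ factors as the diagonal $r\mapsto(r,\dots,r)$ into $\mathcal{R}^{|J|}$ followed by the product of the individual $\operatorname{mul}^{\mathscr{A}_{j}}$ maps, and the diagonal is easily seen to be \textbf{pro.con}. No other step should present obstacles, since everything reduces to componentwise checks using the hypothesis that each $\mathscr{A}_{j}$ is already a $\boldsymbol{P\mathcal{M}}_{\zeta_{j}}$.
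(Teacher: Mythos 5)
Your proof is correct and, in fact, more careful than the paper's own, which consists of the single line that the result ``follows directly from Corollary~\ref{cor:directproduct}'' (the corollary on finite direct products of proximal \emph{rings}). Your overall strategy is the same one that citation implicitly relies on --- verify the two-factor case coordinatewise with respect to the product proximity, then induct on $|J|$ --- but you explicitly address the one point where the module setting genuinely differs from the ring setting: the scalar-multiplication map has domain $\mathcal{R}\times\prod_{j}\mathscr{A}_{j}$ rather than a product of two copies of the same structure, so it is not literally an instance of the ring argument. Your factorization of $\operatorname{mul}^{\prod\mathscr{A}_{j}}$ through the diagonal $r\mapsto(r,\dots,r)$ followed by the product of the individual $\operatorname{mul}^{\mathscr{A}_{j}}$ maps is a clean way to close that gap (the diagonal is \textbf{pro.con} since $\mathcal{W}\,\zeta\,\mathcal{K}$ gives $\mathcal{W}\times\dots\times\mathcal{W}$ near $\mathcal{K}\times\dots\times\mathcal{K}$ under the product proximity, and the composition of \textbf{pro.con} maps is \textbf{pro.con}). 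The paper buys brevity by leaning on the ring corollary; you buy rigor by making the module-specific step explicit. No gaps.
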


\begin{proof}
The proof follows directly from Corollary \ref{cor:directproduct}.
\end{proof}
\subsection{Descriptive Rings}

\begin{definition}
Let $(\mathcal{R},\oplus,\otimes)$ be a ring with a descriptive relation $\zeta_{\Phi}$. Then $(\mathcal{R},\oplus,\otimes,\zeta_{\Phi})$ is called a descriptive  ring, denoted by $\boldsymbol{P\mathcal{R}}_{\zeta_{\Phi}}$, if the following maps are $\textbf{des.con}$  
 \\ For $(\hat w_{0}, \hat w_{1}) \in \mathcal{R} \times \mathcal{R}$ and $\hat w \in \mathcal{R}$, define:
\begin{enumerate}

\item [1.] $\operatorname{add}_{\zeta_{\Phi}}: \mathcal{R} \times \mathcal{R} \mapsto \mathcal{R}, 
 \quad \operatorname{add}_{\zeta_{\Phi}}(\hat w_{0}, \hat w_{1}) = \hat w_{0} \oplus \hat w_{1}. $
 
\item [2.] $\operatorname{mul}_{\zeta_{\Phi}}: \mathcal{R} \times \mathcal{R} \mapsto \mathcal{R}, \quad  \operatorname{mul}_{\zeta_{\Phi}}(\hat w_{0}, \hat w_{1}) = \hat w_{0} \otimes \hat w_{1}. $

\item [3.]$ \operatorname{inv}_{\zeta_{\Phi}}: \mathcal{R} \mapsto \mathcal{R}, \quad \operatorname{inv}_{\zeta_{\Phi}}(\hat w) = -\hat w.$
\end{enumerate}
\end{definition}

\begin{example}
Let
$\mathcal{K} = \{0, \text{rbA}, \text{rbB}, 1\}$ be a set of Ribbon complexes as illustrated in Figure \ref{fig:Ribbon complex}, where $\{0\} = rbA \cap rbB$ and $\{1\} = rbA \cup rbB$.\newline

\begin{figure}[h!]
    \centering

\includegraphics[width=0.8\textwidth]{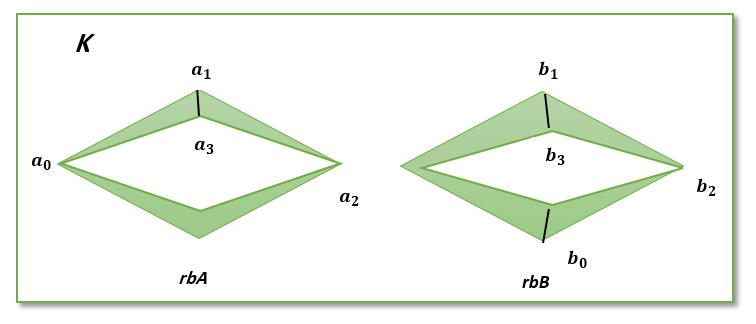} 
    \caption{Cell Complex $\mathcal{K}$.}
    \label{fig:Ribbon complex}
\end{figure}

Suppose the two binary operations $\oplus$ and $\odot$ are defined on $\mathcal{K}$ as follows:

\[
\begin{array}{c|cccc}
\oplus & 0 & \text{rbA} & \text{rbB} & 1 \\ \hline
0 & 0 & \text{rbA} & \text{rbB} & 1 \\
\text{rbA} & \text{rbA} & 0 & 1 & \text{rbB} \\
\text{rbB} & \text{rbB} & 1 & 0 & \text{rbA} \\
1 & 1 & \text{rbB} & \text{rbA} & 0
\end{array}
\qquad
\begin{array}{c|cccc}
\odot & 0 & \text{rbA} & \text{rbB} & 1 \\ \hline
0 & 0 & 0 & 0 & 0 \\
\text{rbA} & 0 & \text{rbA} & 0 & \text{rbA} \\
\text{rbB} & 0 & 0 & \text{rbB} & \text{rbB} \\
1 & 0 & \text{rbA} & \text{rbB} & 1
\end{array}
\]

where $\oplus$ denotes the \textbf{symmetric difference}, i.e.
\[
rbA \oplus rbB = (rbA \cup rbB) \setminus (rbA \cap rbB),
\]
and $\odot$ denotes the \textbf{intersection} between two Ribbon complexes.

\medskip

Now, define the descriptive  relation $\zeta_\Phi$ on $\mathcal{K}$ by
\[
A \;\zeta_\Phi\; B \quad \iff \quad \beta(A) = \beta(B),
\]
where
\[
\beta(\text{rbA}) = 2k + n \quad (\text{see \cite{PetersVergili:2020}})
\]
with $k$ the number of bridge edges and $n$ the number of common vertices.
Hence, $\beta(\text{rbA}) = 4$ and $\beta(\text{rbB}) = 5$, so that $rbA \;\underline{\zeta}_{\Phi}\; rbB$

Note that, $\operatorname{add}_{\zeta_{\Phi}}, \operatorname{mul}_{\zeta_{\Phi}}$ and $\operatorname{inv}_{\zeta_{\Phi}}$ are $\textbf{pro.con}$. Therefore, $(K,\oplus,\odot,\zeta_{\Phi})$ is $\boldsymbol{P\mathcal{R}}_{\zeta_{\Phi}}$
\end{example}

The proofs of the following results are analogous to those for the proximal rings.

\begin{lemma}
If $(\mathcal{R},\boxplus,\boxtimes,\zeta_{\Phi})$ is a $\boldsymbol{P\mathcal{R}}_{\zeta_{\Phi}}$ with unity, and $a \in \mathcal{R}$ is invertible, then the following maps are $\textbf{des.homo}$.

\begin{enumerate}
\item  $\phi_{a} : \mathcal{R} \mapsto \mathcal{R}, \phi_{a}(w)=w\boxplus a.$

\item $ \beta_{a} : \mathcal{R} \mapsto \mathcal{R}, \beta_{a}(w)=a\boxplus w.$ 

\item $\sigma_{a} : \mathcal{R} \mapsto \mathcal{R}, \sigma_{a}(w)=a \boxtimes w.$

\item $\gamma_{a} : \mathcal{R} \mapsto \mathcal{R}, \gamma_{a}(w)=w \boxtimes a.$
\end{enumerate}
\end{lemma}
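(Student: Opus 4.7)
The plan is to mirror the proofs of Lemma \ref{lem:boxplus} and Corollary \ref{Cor:homo} verbatim, with the proximity relation $\zeta$ replaced by the descriptive relation $\zeta_{\Phi}$ and \textbf{pro.con}/\textbf{pro.homo} replaced by \textbf{des.con}/\textbf{des.homo}. The three definitional maps $\operatorname{add}_{\zeta_{\Phi}}$, $\operatorname{mul}_{\zeta_{\Phi}}$, $\operatorname{inv}_{\zeta_{\Phi}}$ are \textbf{des.con} by hypothesis, and the composition of two \textbf{des.con} maps is again \textbf{des.con} (the descriptive analogue of the remark following the definition of \textbf{pro.con}).

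For parts (1) and (2), I would handle $\phi_a$ first. Introduce the auxiliary map $\alpha_a : \mathcal{R} \mapsto \mathcal{R}\times\mathcal{R}$ by $\alpha_a(w)=(w,a)$ and verify it is \textbf{des.con}: if $\mathcal{W}\,\zeta_{\Phi}\,\mathcal{K}$, then in the product descriptive proximity $(\mathcal{W}\times\{a\})\,\zeta_{\Phi}\,(\mathcal{K}\times\{a\})$. Since $\phi_a=\operatorname{add}_{\zeta_{\Phi}}\circ\alpha_a$, the map $\phi_a$ is \textbf{des.con}. The additive inverse $-a$ always exists, so the same argument applied to $-a$ yields that $\phi_{-a}$ is \textbf{des.con}, and the identities $\phi_a\circ\phi_{-a}=\phi_{-a}\circ\phi_a=\operatorname{id}_{\mathcal{R}}$ confirm $\phi_a$ is a \textbf{des.homo}. (Note that additive invertibility, not multiplicative, is what is actually needed here; the hypothesis that $a$ is invertible is more than enough.) The argument for $\beta_a$ is identical, using $\alpha'_a(w)=(a,w)$ instead.

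For parts (3) and (4), the multiplicative nature forces genuine use of the hypothesis. Let $b$ denote the two-sided multiplicative inverse of $a$. Define $f_a : \mathcal{R}\mapsto\mathcal{R}\times\mathcal{R}$ by $f_a(w)=(a,w)$; it is \textbf{des.con} by the same product-proximity argument, so $\sigma_a=\operatorname{mul}_{\zeta_{\Phi}}\circ f_a$ is \textbf{des.con}. Applying the same construction to $b$, the map $\sigma_b$ is \textbf{des.con}, and
\begin{align*}
\sigma_a(\sigma_b(w)) &= a\boxtimes b\boxtimes w = w,\\
\sigma_b(\sigma_a(w)) &= b\boxtimes a\boxtimes w = w,
\end{align*}
so $\sigma_a$ is a \textbf{des.homo} with inverse $\sigma_b$. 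The treatment of $\gamma_a$ is symmetric, using $g_a(w)=(w,a)$ and the fact that $w\boxtimes a\boxtimes b=w$.

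The only conceptual step beyond mere transliteration is confirming that the product descriptive proximity behaves as expected on sets of the form $\mathcal{W}\times\{a\}$ and $\{a\}\times\mathcal{W}$; this is the descriptive counterpart of the product-proximity property used silently throughout Section 3.1 and poses no new difficulty. I do not anticipate any real obstacle, which is consistent with the author's comment that ``the proofs of the following results are analogous to those for the proximal rings.''
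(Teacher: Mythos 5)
Your proposal is correct and follows exactly the route the paper intends: the paper gives no explicit proof here, stating only that the arguments are analogous to the proximal case, and your write-up is precisely that transliteration of Lemma~\ref{lem:boxplus}, Lemma~\ref{lem:sigma}, and Corollary~\ref{Cor:homo} with $\zeta$ replaced by $\zeta_{\Phi}$. Your observation that only additive inverses are needed for $\phi_a$ and $\beta_a$, while multiplicative invertibility is what makes $\sigma_a$ and $\gamma_a$ homeomorphisms rather than merely continuous, is a correct and slightly sharper reading than the lemma's blanket hypothesis.
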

\begin{proposition}
Let $(\mathcal{R},\boxplus,\boxtimes,\zeta_{\Phi})$ be a $\boldsymbol{P\mathcal{R}}_{\zeta_{\Phi}}$ with unity, If $\mathcal{F}$ is a closed subset of $\mathcal{R}$ and $c \in \mathcal{R}$ is invertible, then the following sets are closed.

\begin{enumerate}
\item[1.] $c \boxplus \mathcal{F}$.
\item[2.] $c \boxminus \mathcal{F}$.
\item[3.] $c \boxtimes \mathcal{F}$.
\item[4.] $\mathcal{F} \boxtimes c$.
\end{enumerate}
\end{proposition}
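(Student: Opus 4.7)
The plan is to mirror, step for step, the proximal arguments from Proposition~\ref{prop:closed} and the subsequent proposition concerning $\lambda\boxtimes\mathcal{V}$, using the preceding lemma as the descriptive replacement for Lemma~\ref{lem:boxplus}, Lemma~\ref{lem:sigma}, and Corollary~\ref{Cor:homo}. Since $c$ is assumed invertible, all four maps $\phi_c$, $\beta_c$, $\sigma_c$, $\gamma_c$ together with the inverses $\phi_{-c}$, $\beta_{-c}$, $\sigma_{c^{-1}}$, $\gamma_{c^{-1}}$ are \textbf{des.con}. The single analytic fact I rely on throughout is the descriptive analog of the closure-containment property: any \textbf{des.con} map $\hat{h}$ satisfies $\hat{h}(cl_{\zeta_{\Phi}}(\mathcal{A}))\subseteq cl_{\zeta_{\Phi}}(\hat{h}(\mathcal{A}))$.

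For (1), I would set $\mathcal{A}=c\boxplus\mathcal{F}$ and apply $\beta_{-c}$ to deduce that $\beta_{-c}(cl_{\zeta_{\Phi}}(c\boxplus\mathcal{F}))\subseteq cl_{\zeta_{\Phi}}(\mathcal{F})=\mathcal{F}$, using that $\mathcal{F}$ is closed. Applying $\beta_{c}$ to both sides and using $\beta_c\circ\beta_{-c}=\operatorname{id}_{\mathcal{R}}$ yields $cl_{\zeta_{\Phi}}(c\boxplus\mathcal{F})\subseteq c\boxplus\mathcal{F}$, which gives the missing reverse of the trivial inclusion, so $c\boxplus\mathcal{F}$ is closed. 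Item (2) is then immediate by substituting $-c$ for $c$, since $\beta_{-c}$ and $\beta_{c}$ are both \textbf{des.homo}.

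For (3) and (4) the same template runs with the multiplicative \textbf{des.homo} maps. For $c\boxtimes\mathcal{F}$, push by $\sigma_{c^{-1}}$ to obtain $\sigma_{c^{-1}}(cl_{\zeta_{\Phi}}(c\boxtimes\mathcal{F}))\subseteq cl_{\zeta_{\Phi}}(\mathcal{F})=\mathcal{F}$, then apply $\sigma_{c}$ to conclude $cl_{\zeta_{\Phi}}(c\boxtimes\mathcal{F})\subseteq c\boxtimes\mathcal{F}$. The argument for $\mathcal{F}\boxtimes c$ is identical after replacing $\sigma_{c}$ by $\gamma_{c}$, since the preceding lemma guarantees that $\gamma_{c}$ is also a \textbf{des.homo} when $c$ is invertible.

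The proof is essentially mechanical once the preceding lemma is in hand; the only point that needs a moment of care is the appeal to $\hat{h}(cl_{\zeta_{\Phi}}\mathcal{A})\subseteq cl_{\zeta_{\Phi}}\hat{h}(\mathcal{A})$ in the descriptive setting, which is the descriptive counterpart of the fact used tacitly in Proposition~\ref{prop:closed}. I expect this to be the main (and only) subtlety, and since it follows directly from the definition of \textbf{des.con} applied to the singleton-set pair $(\{\mathfrak{x}\},\mathcal{A})$ characterizing closure, it can be cited in passing rather than re-proved.
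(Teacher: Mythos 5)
Your proof is correct and matches the paper's intent exactly: the paper gives no explicit argument here, stating only that the proofs are analogous to the proximal case (Proposition~\ref{prop:closed} and the proposition on $\lambda\boxtimes\mathcal{V}$), and that is precisely the translation you carry out, including the key closure-containment property $\hat{h}(cl_{\zeta_{\Phi}}\mathcal{A})\subseteq cl_{\zeta_{\Phi}}(\hat{h}(\mathcal{A}))$. One small quibble on item (2): substituting $-c$ for $c$ produces $(-c)\boxplus\mathcal{F}$ rather than $c\boxminus\mathcal{F}=c\boxplus(-\mathcal{F})$, so you should instead first observe that $-\mathcal{F}=\operatorname{inv}_{\zeta_{\Phi}}(\mathcal{F})$ is closed (since $\operatorname{inv}_{\zeta_{\Phi}}$ is an involutive \textbf{des.homo}) and then apply item (1).
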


\begin{proposition}
The direct product of a finite family of descriptive rings, is a descriptive ring. 
\end{proposition}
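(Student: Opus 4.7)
The plan is to mirror the two-step strategy used for proximal rings: first establish the base case for a product of two descriptive rings, and then lift this to a finite family by induction, exactly as in Corollary \ref{cor:directproduct}.

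For the base case, I would take two descriptive rings $(\mathcal{R}_{1},\boxplus,\boxtimes,\zeta_{\Phi})$ and $(\mathcal{R}_{2},\oplus,\otimes,\theta_{\Phi})$ and equip $\mathcal{R}_{1}\times\mathcal{R}_{2}$ with the coordinatewise ring operations, together with the product descriptive relation $\zeta_{\Phi}\times\theta_{\Phi}$ induced by the product probe function. The task then reduces to verifying that the three structural maps $\operatorname{add}_{\zeta_{\Phi},\theta_{\Phi}}$, $\operatorname{mul}_{\zeta_{\Phi},\theta_{\Phi}}$, and $\operatorname{inv}_{\zeta_{\Phi},\theta_{\Phi}}$ on $\mathcal{R}_{1}\times\mathcal{R}_{2}$ are \textbf{des.con}. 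Since each of these splits as a product of the corresponding coordinate maps, which are \textbf{des.con} by hypothesis, the product map preserves the product descriptive proximity; this is the descriptive analogue of the argument used for $\boldsymbol{P\mathcal{R}}_{\zeta,\theta}$ above, and is essentially formal once the product proximity is in place.

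Having established the two-factor case, I would complete the proof by induction on $\lvert J\rvert$. The inductive step writes $\prod_{j=1}^{n+1}\mathscr{A}_{j}\cong\bigl(\prod_{j=1}^{n}\mathscr{A}_{j}\bigr)\times\mathscr{A}_{n+1}$, applies the induction hypothesis to the first factor to obtain a descriptive ring, and then invokes the two-factor result. This is precisely the reduction stated in Corollary \ref{cor:directproduct}, and the proposition records that the same reduction carries over verbatim to the descriptive setting.

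The only subtle point, and the main place where care is needed rather than a serious obstacle, is confirming that the descriptive relation on the product behaves correctly with respect to the probe function. Concretely, one must check that $\zeta_{\Phi}\times\theta_{\Phi}$ is itself a descriptive proximity for a suitable product probe (coordinatewise feature concatenation), so that separate \textbf{des.con}ness of each coordinate map implies \textbf{des.con}ness of the product. Once this compatibility is noted, the rest of the argument is identical in structure to the proximal ring case, and the proof concludes in one line by referring to the already established Corollary \ref{cor:directproduct}.
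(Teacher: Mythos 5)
Your proposal is correct and matches the paper's intent exactly: the paper gives no separate argument here, stating only that the proof is analogous to the proximal case, i.e. the two-factor product argument followed by the induction of Corollary~\ref{cor:directproduct}, which is precisely what you carry out. Your extra remark about checking that the product descriptive relation is itself a descriptive proximity is a reasonable point of care but does not change the structure of the argument.
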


\begin{definition}
 Let $(\mathcal{R}, \boxplus,\boxtimes)$ be a  field endowed with a descriptive relation $\zeta_{\Phi}$. Then, $(\mathcal{R},\boxplus, \boxtimes, \zeta_{\Phi})$ is called a descriptive field,  denoted by $\boldsymbol{P\mathcal{F}}_{\zeta_\Phi}$, if the maps $\operatorname{add}_{\zeta_{\Phi}}$, $\operatorname{mul}_{\zeta_{\Phi}}$ , and $\operatorname{inv}_{\zeta_{\Phi}}$ are $\textbf{des.con}$. Moreover, the inversion map
 
\[\colorbox{green!20}{\(\operatorname{inv^{-1}_{\zeta_\Phi}} : \mathcal{R}^* \to \mathcal{R}^*, \ \operatorname{inv}^{-1}_{\zeta_\Phi}(w) = w^{-1}\)}\]

is $\textbf{des.con}$.

\end{definition}
\begin{definition}
Let $\mathcal{M}$ be an $\mathcal{R}-module$ and $(\mathcal{R},\boxplus,\boxtimes,\zeta_{\Phi})$ is a $\boldsymbol{P\mathcal{R}}_{\zeta_{\Phi}}$. Then, $(\mathcal{M},\boxplus,\boxtimes,\zeta_{\Phi})$ is called a descriptive $\mathcal{R}-module$, denoted by $\boldsymbol{P\mathcal{M}}_{\zeta_{\Phi}}$, provided the following maps are $\textbf{des.con}$:
\newline
\begin{enumerate}

\item [1.] $\operatorname{add}_{\zeta_{\Phi}}^{\mathcal{M}}: \mathcal{M} \times \mathcal{M} \mapsto \mathcal{M}, 
 \quad \boxed {\operatorname{add}_{\zeta_{\Phi}}^{\mathcal{M}}(m_{0}, m_{1}) = m_{0} \boxplus m_{1}}.$ \newline
 
\item [2.] $\operatorname{mul}_{\zeta_{\Phi}}^{\mathcal{M}}: \mathcal{R} \times \mathcal{M} \mapsto \mathcal{M}, \quad  \boxed{\operatorname{mul}_{\zeta_{\Phi}}^{\mathcal{M}}(r, M) = r \boxtimes M.}$ \newline

\item [3.]$ \operatorname{inv}_{\zeta_{\Phi}}^{\mathcal{M}}: \mathcal{M} \mapsto \mathcal{M}, \quad \boxed{\operatorname{inv}_{\zeta_{\Phi}}^{\mathcal{M}}(m) = -m.}$
\end{enumerate}
\end{definition}
\section{Conclusion}

This work introduces the concepts of proximal rings and descriptive rings by combining the algebraic structure of rings with proximal and descriptive relations, based on which a topological system resulting from open subsets is defined. This study not only establishes a new topological algebraic framework, but also opens up promising avenues for future research.

\end{document}